\numberwithin{equation}{section}
\begin{document}

\newtheorem{definition}{Definition}
\newtheorem{lemma}{Lemma}
\newtheorem{remark}{Remark}
\newtheorem{theorem}{Theorem}
\newtheorem{proposition}{Proposition}
\newtheorem{assumption}{Assumption}
\newtheorem{example}{Example}
\newtheorem{corollary}{Corollary}
\def\e{\varepsilon}
\def\Rn{\mathbb{R}^{n}}
\def\Rm{\mathbb{R}^{m}}
\def\E{\mathbb{E}}
\def\hte{\bar\theta}
\def\cC{{\mathcal C}}
\numberwithin{equation}{section}

\begin{frontmatter}

\title{Kantorovich-Rubinstein Distance and Approximation for   Non-local Fokker-Planck Equations}
\author{\bf\normalsize{
Ao Zhang\footnote{School of Mathematics and Statistics, \& Center for Mathematical Sciences,  Huazhong University of Sciences and Technology, Wuhan 430074,  China. Email: \texttt{zhangao1993@hust.edu.cn}} and Jinqiao Duan\footnote{Departments of Applied Mathematics \& Physics, Illinois Institute of Technology, Chicago, IL 60616, USA. Email: \texttt{duan@iit.edu}}
}}

\begin{abstract}
This work is devoted to studying complex dynamical systems under non-Gaussian fluctuations.
We first estimate the Kantorovich-Rubinstein distance for solutions of non-local Fokker-Planck equations  associated with stochastic differential equations with non-Gaussian L\'evy noise. This is then  applied to establish  weak convergence  of the corresponding probability distributions. Furthermore, this leads to smooth approximation for  non-local Fokker-Planck equations, as illustrated in an example.
\end{abstract}

\journal{Chaos}

\begin{keyword}
Non-local Fokker-Planck equation; Kantorovich-Rubinstein distance; Stochastic differential equations with L\'evy processes;  Smooth approximation; Non-Gaussian stochastic dynamics
\end{keyword}

\end{frontmatter}

\textbf{Nonlinear systems are often under non-Gaussian random fluctuations. The probability distribution evolution is for such a stochastic dynamical system is described by a non-local Fokker-Planck equation. This work is devoted to studying complex dynamical systems under non-Gaussian fluctuations. In this present paper, we   estimate the Kantorovich-Rubinstein distance  for solutions of non-local Fokker-Planck equations  associated with stochastic differential equations with non-Gaussian L\'evy noise. As a consequence, this provides a smooth approximation for  non-local Fokker-Planck equations.
}

\section{Motivation}

Complex systems are often under the influence of randomness \cite{Moss1, Horst, Gar, VanKampen3}. Uncertainties may also be caused by our lack of knowledge of some physical processes that are not well represented in the mathematical models \cite{Palmer1, ChenDuan, Kantz, Wilks, Williams}. Although these random mechanisms appear to be very small or very fast, their long time impact on the system evolution may be delicate or even profound \cite{Arnold, DuanBook2015}. These delicate impacts on the overall evolution of dynamical systems has been observed in, for example, stochastic bifurcation \cite{Crauel, CarLanRob01, Horst}, stochastic resonance \cite{imkeller2002model}, and noise-induced pattern formation \cite{Gar, blomker2003pattern}. Hence taking stochastic effects into account is of central importance for mathematical modeling of complex systems under uncertainty, and this leads to stochastic ordinary differential equations (SDEs) \cite{Arnold, Ikeda, Okse2003, WaymireDuan}. It is therefore crucial to investigate dynamics under uncertainty, in the context of models arising from applications in, for example, geophysical-climate systems.

Fluctuations in complex systems are often non-Gaussian \cite{Woy, Dit, Swinney, Shlesinger, taqqu, dybiec2009levy} rather than Gaussian. For instance, it has been argued that diffusion by geophysical turbulence \cite{Shlesinger} corresponds to a series of ``pauses", when the particle is trapped by a coherent structure, and ``fights" or ``jumps" or other extreme events, when the particle moves in the jet flow. Paleoclimatic data \cite{Dit} also indicate such irregular processes. There are also experimental demonstrations of L\'evy fights in optimal foraging theory and rapid geographical spread of emergent infectious disease.   This motivates our investigation of dynamical systems driven by non-Gaussian fluctuations.

 We consider the following stochastic differential equation (SDE) in $\mathbb{R}^{d}$ with a (non-Gaussian) L\'evy process:
\begin{equation}\label{SDE}
\mathrm{d} X_{t}=b_{t}\left(X_{t}\right) \mathrm{d} t+\int_{|z|<R} g_{t}\left(X_{t-}, z\right) \tilde{N}(\mathrm{d}t, \mathrm{d}z),
\end{equation}
where the drieft (i.e., vector field) $b: [0, T] \times \mathbb{R}^{d} \rightarrow \mathbb{R}^{d}$ and the `anomalous diffusion'  (i.e., jump coefficient)  $g: [0, T] \times \mathbb{R}^{d} \times \mathbb{R}^{d} \rightarrow \mathbb{R}^{d}$ are Borel measurable functions.  Here   $N$ is a Poisson random measure with intensity measure $\mathrm{d} t \nu(\mathrm{d}z)$, and $\tilde{N}(\mathrm{d}t, \mathrm{d} z):=N(\mathrm{d}t, \mathrm{d}z)-\mathrm{d}t \nu(\mathrm{d}z)$ is the compensated Poisson random measure \cite{DuanBook2015, Applebaum}. Moreover, $R > 0$ is a fixed constant, and $T$ is the duration for the system evolution. 
There are recent works on SDEs with jumps, as in, for example,  \cite{DuanBook2015, QZ08, XZ20, XZ13, K96}.

The non-local Fokker-Planck equation for the probability distribution (i.e., probability measure)  $\mu_{t}$   associated to SDE \eqref{SDE} is
\begin{equation}\label{FPE}
\partial_{t} \mu_{t}=\left(\mathscr{L}^{b}+\mathscr{L}_{\nu}^{g}\right)^{*} \mu_{t},  \quad \mu|_{t=0}=\mu_0,
\end{equation}
where
\begin{equation*}
\mathscr{L}^{b} u(x):=\left(b_{t}^{i} \partial_{i} u\right)(x), \quad
\mathscr{L}_{\nu}^{g} u(x):=\int_{|z|<R}\left[u(x+g_{t}(x, z))-u(x)-g_{t}(x, z) \cdot \nabla u(x)\right] \nu(\mathrm{d}z).
\end{equation*}
Here and below, we use Einstein's convention that the repeated indices in a product will be summed automatically.

Recall that the usual (or local) Fokker-Planck equation \cite{BKR15, BRS15, GW12} for an SDE with (Gaussian) Brownian motion $ dX_t=b(X_t) dt + \sigma(X_t) dB_t $  has been actively studied from the functional-analytical, variational and as well from the probabilistic point of view.
 
 In this present paper, we will estimate the distance between solutions (as probability distributions) of the non-local Fokker-Planck equation (\ref{FPE}), as  this distance plays an important role not only for the study of such qualitative properties of solutions as uniqueness and stability, but also for numerical approximations of probability distributions. The estimates for distances between solutions to non-local Fokker-Planck equations with different drift or noise terms are particularly interesting.

If the jump coefficient $g$ is identically zero, then non-local Fokker-Planck equation \eqref{FPE} reduces to the continuity equation (or Liouville equation)
\begin{equation}\label{CE}
\partial_{t} \mu_{t}+\operatorname{div}\left(\mu_{t} b_{t}\right)=0, \quad \mu|_{t=0}=\mu_0.
\end{equation}
Seis \cite{CS17} recently established quantitative stability estimates for solutions of \eqref{CE} in terms of the Kantorovich-Rubinstein distance. More precisely, for each $i=1,2$, if $b^{i} \in L^{1}\left(0, T ; W^{1, p}\left(\mathbb{R}^{d}, \mathbb{R}^{d}\right)\right)(p>1)$ and $\mu_{t}^{i}=\rho_{t}^{i} \mathrm{d} x$ is a solution to the continuity equation \eqref{CE} such that density $\rho^{i} \in L^{\infty}\left(0, T ; L^{1} \cap L^{p /(p-1)}\left(\mathbb{R}^{d}\right)\right)$, then
\begin{equation*}
\mathcal{D}_{\delta}\left(\mu_{t}^{1}, \mu_{t}^{2}\right) \leq C_{1}+\frac{C_{2}}{\delta}\left\|b^{1}-b^{2}\right\|_{L^{1}\left(0, T ; L^{p}\left(\mathbb{R}^{d}\right)\right)},
\end{equation*}
where $C_{1}$ and $C_{2}$ are constants depending on norms of vector fields $b^{1}, b^{2}$ and solutions $\rho^{1}, \rho^{2}$ (see Remark \ref{rem} below for the definition of $\mathcal{D}_{\delta}$ ). There are also results on the qualitative stability of \eqref{CE}; roughly speaking, if the sequence of vector fields $b^{n}$ converge to some $b$ in a certain sense, then the corresponding solutions $\mu^{n}$ tend to $\mu$ too.

Recently, Li and Luo \cite{LL19} considered a local Fokker-Planck equation
\begin{equation}\label{FPE1}
\partial_{t} \mu_{t}-\frac{1}{2} \sum_{k, l=1}^{d} \partial_{k l}\left(\mu_{t} a_{k l}\right)+\operatorname{div}\left(\mu_{t} b_t\right)=0, \quad \mu|_{t=0}=\mu_0,
\end{equation}
and establish quantitative stability estimates for the FPE \eqref{FPE1}: for each $i=1,2$, if $b^{i} \in L^{1}\left(0, T; W^{1, p}\left(\mathbb{R}^{d}, \mathbb{R}^{d}\right)\right)$, $\sigma^{i} \in L^{2}\left(0, T; W^{1,2 p}\left(\mathbb{R}^{d}, \mathcal{M}_{d, m}\right)\right)$ ($p>1$), where $\mathcal{M}_{n, m}$ is the space of $n \times m$ matrices, $a=\sigma \sigma^{*}$, and $\mu_{t}^{i}=\rho_{t}^{i} \mathrm{d}x$ is a solution to the corresponding Fokker-Planck equation \eqref{FPE1} such that $\rho^{i} \in L^{\infty}\left(0, T ; L^{1} \cap L^{p /(p-1)}\left(\mathbb{R}^{d}\right)\right)$, then
\begin{equation*}
\begin{aligned}
\tilde{\mathcal{D}}_{\delta}\left(\mu_{t}^{1}, \mu_{t}^{2}\right) \leq & \tilde{\mathcal{D}}_{\delta}\left(\mu_{0}^{1}, \mu_{0}^{2}\right)+C_1\left(\frac{1}{\delta}\left\|b^{1}-b^{2}\right\|_{L^{1}\left(L^{p}\right)}+\frac{1}{\delta^{2}}\left\|\sigma^{1}-\sigma^{2}\right\|_{L^{2}\left(L^{2 p}\right)}^{2}\right) \\
&+C_2\left(\left\|\nabla b^{1}\right\|_{L^{1}\left(L^{p}\right)}+\left\|\nabla \sigma^{1}\right\|_{L^{2}\left(L^{2 p}\right)}^{2}\right),
\end{aligned}
\end{equation*}
where $C_{1}$ and $C_{2}$ are constants depending on $d, p$ and norms of solutions $\rho^{1}, \rho^{2}$ (see \eqref{KR} below for the definition of $\tilde{\mathcal{D}}_{\delta}$ ).

Manita \cite{OM} drived estimates for the Kantorovich transportation cost functional along solutions to different Fokker-Planck equations \eqref{FPE1} for measures with the same uniformly elliptic and bounded diffusion coefficient but different dissipative drifts and different initial conditions. Bogachev, R\"ockner and Shaposhnikov \cite{BRS16} obtained upper bounds for the total variation, entropy and Kantorovich distances between two solutions to local Fokker-Planck equations \eqref{FPE1}, under the main assumptions that the diffusion coefficients are locally uniformly elliptic and locally Lipschitz continuous in spatial variables, and the drift coefficients are locally bounded.

In this present paper,  we will hence investigate the Kantorovich-Rubinstein distance for solutions of non-local Fokker-Planck equations \eqref{FPE}.  Using the superposition principle recently proved in \cite{FX19, XZ13}, we will obtain the Kantorovich-Rubinstein distance for non-local Fokker-Planck equations \eqref{FPE} with weakly differentiable coefficients and examine the consequences (stability and smooth approximation) of this distance estimation.

This paper is organized as follows. In Section \ref{sec2} we introduce some spaces and notations, define the Kantorovich-Rubinstein distance, and state our main result. In Section \ref{sec3}, we make the necessary preparations for proving the main results. The proof of the main result Theorem \ref{theo} is in Section \ref{sec4} and its consequence in Remark \ref{rem1}. This paper ends with illustrative examples for smooth approximation for non-local Fokker-Planck equation and a brief discussion in Section \ref{sec5}.

\section{Statement of Main Result}\label{sec2}
Before state our main results, we introduce some spaces and notations.

Let $(\Omega, \mathscr{F},\left(\mathscr{F}_{t}\right)_{t \geq 0}, \mathbb{P})$ be a complete filtered probability space.

For $p, q \in[1, \infty]$ and $0 <T<\infty$, let $L^{q}\left(0, T ; L^{p}\left(\mathbb{R}^{d}\right)\right)$ be the space of all Borel functions on $[0, T] \times \mathbb{R}^{d}$ with norm
\begin{equation*}
\|f\|_{L^q(L^p)}:=\left(\int_{0}^{T}\left(\int_{\mathbb{R}^{d}}|f(t, x)|^{p} \mathrm{d} x\right)^{q / p} \mathrm{d} t\right)^{1 / q}<\infty.
\end{equation*}
For $p=\infty$ or $q=\infty,$ the above norm is understood as the usual $L^{\infty}$-norm. For a measurable function $g_{t}(x, z): \mathbb{R}_{+} \times \mathbb{R}^{d} \times \mathbb{R}^{d} \rightarrow \mathbb{R}^{d}$, we define the following quantity which will be used below: for $j=0,1$
\begin{equation*}
{\Gamma_{j, R}^{q, p}}(g):=\int_{|z|<R}\left\|\nabla_{x}^{j} g(z)\right\|^{q}_{L^q(L^p)} \nu(\mathrm{d}z).
\end{equation*}

Let $\mathcal{P}\left(\mathbb{R}^{d}\right)$ be the space of all probability measures on $\mathbb{R}^{d}$ endowed with the weak convergence topology. For every $\mu, \nu \in \mathcal{P}\left(\mathbb{R}^{d}\right),$ let $\mathcal{C}(\mu, \nu)$ be the collection of probability measures on $\mathbb{R}^{d} \times \mathbb{R}^{d}$ having $\mu$ and $\nu$ as the first and the second marginal distribution, respectively. Fix some $\delta>0$, define the Kantorovich-Rubinstein distance with logarithmic cost function taken from the theory of optimal transportation and given by
\begin{equation}\label{KR}
\tilde{\mathcal{D}}_{\delta}(\mu, \nu)=\inf _{\pi \in \mathcal{C}(\mu, \nu)} \int_{\mathbb{R}^{d} \times \mathbb{R}^{d}} \log \left(\frac{|x-y|^{2}}{\delta^{2}}+1\right) \mathrm{d} \pi(x, y).
\end{equation}

According to the elementary inequality
\begin{equation*}
\log \left(1+s^{2}\right) \leq \log \left(1+2 s+s^{2}\right)=2 \log (1+s), \quad s \geq 0,
\end{equation*}
the quantity $\tilde{\mathcal{D}}_{\delta}(\mu, \nu)$ is finite if $\mu, \nu \in \mathcal{P}_{\log }\left(\mathbb{R}^{d}\right),$ where
\begin{equation*}
\mathcal{P}_{\log }\left(\mathbb{R}^{d}\right)=\left\{\nu \in \mathcal{P}\left(\mathbb{R}^{d}\right): \int_{\mathbb{R}^{d}} \log (1+|x|) \mathrm{d} \nu(x)<\infty\right\}.
\end{equation*}
Note that $\log \left(1+|x-y|^{2} / \delta^{2}\right)$ does not define a metric on $\mathbb{R}^{d}$.

\begin{remark}\label{rem}
(1) The Kantorovich-Rubinstein distance used in \cite{CS17} is defined as
\begin{equation*}
\mathcal{D}_{\delta}(\mu, \nu)=\inf _{\pi \in \mathcal{C}(\mu, \nu)} \int_{\mathbb{R}^{d} \times \mathbb{R}^{d}} \log \left(\frac{|x-y|}{\delta}+1\right) \mathrm{d} \pi(x, y), \quad \mu, \nu \in \mathcal{P}_{\log }\left(\mathbb{R}^{d}\right).
\end{equation*}
The two quantities $\mathcal{D}_{\delta}(\mu, \nu)$ and $\tilde{\mathcal{D}}_{\delta}(\mu, \nu)$ have the following relations:
\begin{equation*}
\tilde{\mathcal{D}}_{\delta}(\mu, \nu) \leq 2 \mathcal{D}_{\delta}(\mu, \nu) \quad \text { and } \quad \mathcal{D}_{\delta}(\mu, \nu) \leq\left(\frac{\tilde{\mathcal{D}}_{\delta}(\mu, \nu)}{\log 2}\right)^{1 / 2}+\tilde{\mathcal{D}}_{\delta}(\mu, \nu).
\end{equation*}
(2) For every $\mu, \nu \in \mathcal{P}_{\log }\left(\mathbb{R}^{d}\right),$ if
\begin{equation*}
\lim _{\delta \rightarrow 0} \frac{\tilde{\mathcal{D}}_{\delta}(\mu, \nu)}{|\log \delta|}=0,
\end{equation*}
then $\mu=\nu$. This fact is useful in many applications, for instance, the proof of uniqueness of solutions to the non-local Fokker-Planck equation \eqref{FPE}.\\
(3) The reason for us to choose $\tilde{\mathcal{D}}_{\delta}$ instead of $\mathcal{D}_{\delta}$ is that the square function is convenient for applying the It\^o formula.
\end{remark}

The following convention will be used throughout the paper: $C$ with or without indices will denote different positive constants (depending on the indices) whose values may change from one instance to another.

We consider the Kantorovich-Rubinstein distance for solutions of non-local Fokker-Planck equations with weakly differentiable coefficients. The main theorem is as follows.

\begin{theorem}\label{theo}
 (Upper bound for the  Kantorovich-Rubinstein distance of probability distributions)\\
Let $p>1$ and $q$ be such as $1/p + 1/q=1 $. For each $i \in\{1,2\}$, assume that $g^{i}$ satisfies ${\Gamma_{j, R}^{2, 2p}}(g^i)+{\Gamma_{j, R}^{4, 4p}}(g^i)<\infty$ for all $j=0, 1$, and $b^{i} \in L^{1}\left(0, T ; W^{1, p}\left(\mathbb{R}^{d}, \mathbb{R}^{d}\right)\right),$ and the probability density $\rho^{i} \in L^{\infty}\left(0, T ; L^{1} \cap L^{q}\left(\mathbb{R}^{d}\right)\right)$ with probability distribution $\mu_{t}^{i}=\rho_{t}^{i} \mathrm{d} x$ be the solution to the corresponding non-local Fokker-Planck equation \eqref{FPE} with $b=b^{i}$ and $g=g^i$.   Assume that the initial probability distribution $\mu_{0}^{i} \in \mathcal{P}_{\log }\left(\mathbb{R}^{d}\right), i=1, 2.$    \\ Then, for all $t \in[0, T]$ we have the following Kantorovich-Rubinstein distance estimate 
\begin{equation}\label{theo1}
\begin{aligned}
\tilde{\mathcal{D}}_{\delta}\left(\mu_{t}^{1}, \mu_{t}^{2}\right) & \leq \tilde{\mathcal{D}}_{\delta}\left(\mu_{0}^{1}, \mu_{0}^{2}\right)+C\|\rho^{2}\|_{L^{\infty}(L^{q})}\left[ \frac{1}{\delta}\|b^{1}-b^{2}\|_{L^{1}(L^{p})}+\frac{1}{\delta^{2}} \int_{|z|<R} \|g^{1}-g^{2}\|_{L^{2}(L^{2 p})}^{2}\nu(\mathrm{d}z) \right.\\
&\left. +\frac{1}{\delta^{4}} \int_{|z|<R} \|g^{1}-g^{2}\|_{L^{4}(L^{4 p})}^{4}\nu(\mathrm{d}z) \right] +C^{\prime}\left(\sum_{i=1}^{2}\|\rho^{i}\|_{L^{\infty}(L^{q})}\right)\times \\
& \left[\|\nabla b^{1}\|_{L^{1}(L^{p})} + \int_{|z|<R} \|\nabla g^{1}\|_{L^{2}(L^{2 p})}^{2}\nu(\mathrm{d}z)+ \int_{|z|<R} \|\nabla g^{1}\|_{L^{4}(L^{4 p})}^{4}\nu(\mathrm{d}z) \right].
\end{aligned}
\end{equation}
Here $C$ and $C^{\prime}$ are positive constants depending only on $d$ and $p$.
\end{theorem}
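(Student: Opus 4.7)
The plan is to invoke the superposition principle of \cite{FX19, XZ13}, which identifies each probability solution $\mu_t^i$ of \eqref{FPE} with the law of a solution $X_t^i$ to the SDE \eqref{SDE} with coefficients $(b^i,g^i)$. I would couple the two processes on a common filtered probability space by driving them with the \emph{same} compensated Poisson random measure $\tilde N$ and by choosing $(X_0^1,X_0^2) \sim \pi_0$, where $\pi_0 \in \mathcal{C}(\mu_0^1,\mu_0^2)$ is near-optimal for $\tilde{\mathcal{D}}_{\delta}$. Since the marginals of $(X_t^1,X_t^2)$ form an admissible coupling of $(\mu_t^1,\mu_t^2)$, it suffices to bound $\mathbb{E} f_\delta(\Delta_t)$ with $\Delta_t := X_t^1-X_t^2$ and $f_\delta(x):=\log(1+|x|^2/\delta^2)$.

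Next I would apply It\^o's formula for jump processes to $f_\delta(\Delta_t)$. Taking expectations kills the compensated-Poisson martingale, leaving an initial term $\mathbb{E} f_\delta(\Delta_0)$, a drift term
\begin{equation*}
\int_0^t \mathbb{E}\bigl[(b_s^1(X_s^1)-b_s^2(X_s^2))\cdot \nabla f_\delta(\Delta_s)\bigr]\,\mathrm{d}s,
\end{equation*}
and a jump contribution $\int_0^t\!\!\int_{|z|<R}\mathbb{E}[J(s,z)]\,\nu(\mathrm{d}z)\,\mathrm{d}s$, where
\begin{equation*}
J(s,z) = f_\delta(\Delta_s + h(s,z)) - f_\delta(\Delta_s) - h(s,z)\cdot \nabla f_\delta(\Delta_s),\qquad h(s,z):=g_s^1(X_s^1,z)-g_s^2(X_s^2,z).
\end{equation*}
By optimality of $\pi_0$ the initial term is $\tilde{\mathcal{D}}_\delta(\mu_0^1,\mu_0^2)$, so the task reduces to estimating the drift and jump integrals.

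For the drift term I would split $b^1(X^1)-b^2(X^2)=[b^1(X^1)-b^1(X^2)]+[(b^1-b^2)(X^2)]$. For the first piece, the Crippa--De Lellis maximal-function estimate for $W^{1,p}$ functions gives $|b^1(x)-b^1(y)| \le C|x-y|(M|\nabla b^1|(x)+M|\nabla b^1|(y))$, and combined with the elementary bound $|x|\,|\nabla f_\delta(x)|\le 2$ this produces an integrand controlled pointwise by $M|\nabla b^1|(X^i)$; taking expectations via H\"older, using $\rho^i\in L^\infty(L^q)$ and the $L^p$-boundedness of $M$, yields the $\|\rho^i\|_{L^\infty(L^q)}\|\nabla b^1\|_{L^1(L^p)}$ contribution. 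For the second piece the uniform bound $|\nabla f_\delta|\le 2/\delta$ together with H\"older produces the $\delta^{-1}\|\rho^2\|_{L^\infty(L^q)}\|b^1-b^2\|_{L^1(L^p)}$ contribution. The jump term is treated in the same spirit: decompose $h=h_1+h_2$ with $h_1 = g^1(X^1,z)-g^1(X^2,z)$ and $h_2=(g^1-g^2)(X^2,z)$, and bound $J(s,z)$ by a Taylor-type expansion. Using $|D^2 f_\delta(y)|\le C/(\delta^2+|y|^2)$, the $h_1$ contribution is controlled by $|\Delta|^2(M|\nabla g^1|)^2/(\delta^2+|\Delta|^2) \le (M|\nabla g^1|)^2$, giving the $\|\nabla g^1\|^2_{L^2(L^{2p})}$ term after $\nu$-integration; the $h_2$ pieces, using the global bound $|D^2 f_\delta|\le C/\delta^2$, give the $\delta^{-2}\|g^1-g^2\|^2_{L^2(L^{2p})}$ term.

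The step I expect to be the main obstacle is producing the quartic $\delta^{-4}$ and $\|\nabla g^1\|^4_{L^4(L^{4p})}$ contributions. Because $f_\delta$ is only logarithmic at infinity, a single second-order Taylor bound is not sharp enough to capture uniformly in $|h|$ both the small-jump quadratic regime and the large-jump regime. The natural route is to refine the Taylor expansion one order further, exploiting the decay $|D^3 f_\delta(y)|, |D^4 f_\delta(y)| \lesssim (\delta^2+|y|^2)^{-3/2}, (\delta^2+|y|^2)^{-2}$, or equivalently to split the integrand according to $|h|\lessgtr\delta$: the fourth-order remainder then produces exactly the $|h|^4/\delta^4$-type term, with $|h_1|^4$ turned via the maximal-function estimate into $(M|\nabla g^1|)^4 |\Delta|^4/(\delta^2+|\Delta|^2)^2$, hence absorbed into $\|\nabla g^1\|^4_{L^4(L^{4p})}$, and $|h_2|^4$ integrated against $\rho^2$ to produce the $\delta^{-4}\|g^1-g^2\|^4_{L^4(L^{4p})}$ term. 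Assembling the drift, second-order jump and fourth-order jump estimates and integrating in time gives \eqref{theo1}.
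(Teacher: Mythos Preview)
Your overall strategy---superposition principle, coupling on a common probability space with an optimal initial coupling, It\^o's formula for $f_\delta(\Delta_t)$, and the maximal-function argument for the drift---matches the paper and is correct. The gap is in your treatment of the jump remainder $J(s,z)$.

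You propose to bound $J$ by a second-order (and later fourth-order) Taylor expansion of $f_\delta$, using $|D^2 f_\delta(y)|\le C/(\delta^2+|y|^2)$. But the Taylor remainder places the Hessian at an \emph{intermediate} point $\Delta+\theta h$, so the denominator you obtain is $\delta^2+|\Delta+\theta h|^2$, not $\delta^2+|\Delta|^2$. Your claimed bound ``$|\Delta|^2(M|\nabla g^1|)^2/(\delta^2+|\Delta|^2)\le (M|\nabla g^1|)^2$'' therefore does not follow: when $|h|$ is comparable to or larger than $|\Delta|$ (and the pointwise maximal function can be arbitrarily large), the ratio $|\Delta|^2/(\delta^2+|\Delta+\theta h|^2)$ is not uniformly bounded. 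A related issue is that $J$ is nonlinear in $h$, so splitting $h=h_1+h_2$ and estimating ``the $h_1$ contribution'' and ``the $h_2$ contribution'' separately is not well-defined without first reducing $J$ to a sum of powers of $|h|$.

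The paper resolves both difficulties by exploiting the explicit logarithmic form of $f_\delta$ rather than a generic Taylor expansion. Writing
\[
J=\Bigl[\log(1+r)-r\Bigr]+\frac{|h|^2}{|\Delta|^2+\delta^2},\qquad r:=\frac{2\langle \Delta,h\rangle+|h|^2}{|\Delta|^2+\delta^2},
\]
and applying the scalar inequality $|\log(1+r)-r|\le Cr^2$ gives
\[
|J|\le C\,\frac{|h|^2}{|\Delta|^2+\delta^2}+C\,\frac{|h|^4}{(|\Delta|^2+\delta^2)^2},
\]
with the denominator evaluated exactly at $\Delta$. Now one may split $h=h_1+h_2$ via $(a+b)^2\le 2a^2+2b^2$ and $(a+b)^4\le 8a^4+8b^4$; for the $h_1$-pieces the maximal-function estimate $|h_1|\le C|\Delta|\bigl(M|\nabla g^1|(X^1)+M|\nabla g^1|(X^2)\bigr)$ cancels the denominator cleanly, producing the $\|\nabla g^1\|_{L^2(L^{2p})}^2$ and $\|\nabla g^1\|_{L^4(L^{4p})}^4$ terms, while the $h_2$-pieces against $\rho^2$ give the $\delta^{-2}$ and $\delta^{-4}$ terms. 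This is where the quartic contributions come from---not from a fourth-order Taylor expansion or a case split on $|h|\lessgtr\delta$, but from squaring $r$.
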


\begin{remark}\label{rem1}
The quantity $\tilde{\mathcal{D}}_{\delta}(\mu, \nu)$ is a   distance (or metric) in the space of configurations of same total mass \cite[Theorem 7.3]{CV03}, and it metrizes weak convergence \cite[Theorem 7.12]{CV03}: If there exists a sequence of $\delta_{k}$'s decaying to zero as $k \rightarrow \infty$ and such that $\tilde{\mathcal{D}}_{\delta}(\mu_k, \mu)$ is uniformly bounded, then the probability distributions $\mu_{k}$ converges weakly to $\mu$,  with a rate not larger than $\delta_{k}$. In the illustrative examples in the final section, we will see this leads to a smooth approximation for a non-local Fokker-Planck equation.\\
\end{remark}

\section{Preparations}\label{sec3}

We present some preparations   for the proofs of   Theorem 1.
\begin{definition}
A family of probability measures $\left(\mu_{t}\right)_{t \in[0, T]} \subset \mathcal{P}\left(\mathbb{R}^{d}\right)$ is called a weak solution of non-local Fokker-Planck equation \eqref{FPE} if\\
(i) for a.e. $t \in(0, T)$, $\mu_t$ is absolutely continuous with respect to the Lebesgue measure $dx$;\\
(ii) it holds
\begin{equation}\label{def1.1}
\int_{0}^{T} \int_{\mathbb{R}^{d}}\left(\left|b_{t}\right|+\int_{|z|<R}|g_t(x, z)|^2\nu(\mathrm{d} z)\right) \mathrm{d} \mu_{t} \mathrm{d} t<\infty;
\end{equation}
(iii) for any $f \in C_{c}^{1,2}\left([0, T) \times \mathbb{R}^{d}\right)$, one has
\begin{equation}\label{def1.2}
\int_{\mathbb{R}^{d}} f(0, x) \mathrm{d} \mu_0(x)+\int_{0}^{T} \int_{\mathbb{R}^{d}}\left(\partial_{t} f(t, x)+(\mathscr{L}^{b}+\mathscr{L}_{\nu}^{g}) f(t, x)\right) \mathrm{d} \mu_{t}(x) \mathrm{d} t=0,
\end{equation}
where $\mu_0 \in \mathcal{P}\left(\mathbb{R}^{d}\right)$.
\end{definition}

For $r>0$, let $|B(r)|$ be its Lebesgue measure, for the ball $B(r):=\left\{x \in \mathbb{R}^{d}:|x|<r\right\}$. For a locally integrable function $\varphi$ on $\mathbb{R}^{d}$, the Hardy-Littlewood maximal function is defined by
\begin{equation}\nonumber
M \varphi(x):=\sup_{r>0} \frac{1}{|B(r)|} \int_{B(r)}|\varphi(x+y)| \mathrm{d}y
\end{equation}
Then, the following lemma is well known in harmonic analysis and can be found in \cite[Lemma 3.1]{LL19} and \cite[Lemma 3.5, 3.7]{Z13}.

\begin{lemma}\label{lem}
(i) Let $\varphi \in W_{loc}^{1,1}(\mathbb{R}^{d})$. Then there exist a constant $C_{d}>0$ and a negligible set A such that for all $x, y \in A^c$,
\begin{equation}\label{lem1}
|\varphi(x)-\varphi(y)| \leq C_{d}|x-y|(M|\nabla \varphi|(x)+M|\nabla \varphi|(y)).
\end{equation}
(ii) Let $\varphi \in L^{p}(\mathbb{R}^{d})$ for every $p>1$. Then there exists a constant $C_{d, p}>0$ such that
\begin{equation}
\int_{\mathbb{R}^{d}}(M \varphi(x))^{p} \mathrm{d} x \leq C_{d, p} \int_{\mathbb{R}^{d}}|\varphi(x)|^{p} \mathrm{d}x.
\end{equation}
\end{lemma}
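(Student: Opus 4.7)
The plan is to use the superposition principle of \cite{FX19, XZ13} to realize each $\mu^i_t$ as the marginal law of a solution $X^i_t$ to SDE \eqref{SDE} with coefficients $(b^i, g^i)$, and to couple the two processes on a common filtered probability space driven by a single Poisson random measure $N$ with intensity $\mathrm{d}t\,\nu(\mathrm{d}z)$, choosing $(X^1_0, X^2_0) \sim \pi_0$ to be a near-optimal coupling realizing $\tilde{\mathcal{D}}_\delta(\mu^1_0,\mu^2_0)$. Writing $F_\delta(y) := \log(1 + |y|^2/\delta^2)$ and $Y_t := X^1_t - X^2_t$, one has $\tilde{\mathcal{D}}_\delta(\mu^1_t,\mu^2_t) \leq \mathbb{E}[F_\delta(Y_t)]$, so the task reduces to bounding the expected evolution of $F_\delta(Y_t)$ and taking the infimum over $\pi_0$ at the end.

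Apply It\^o's formula for jump semimartingales to $F_\delta(Y_t)$. The drift part contributes $\int_0^t \nabla F_\delta(Y_s)\cdot (b^1_s(X^1_s) - b^2_s(X^2_s))\,\mathrm{d}s$; the jump part produces a mean-zero compensated martingale (which vanishes in expectation after a standard localization justified by $\Gamma_{j,R}^{2,2p}(g^i)+\Gamma_{j,R}^{4,4p}(g^i)<\infty$) plus the compensator
\begin{equation*}
\int_0^t\!\!\int_{|z|<R}\Bigl[F_\delta(Y_{s-}+\Delta g) - F_\delta(Y_{s-}) - \nabla F_\delta(Y_{s-})\cdot\Delta g\Bigr]\nu(\mathrm{d}z)\,\mathrm{d}s,
\end{equation*}
where $\Delta g := g^1_s(X^1_{s-},z) - g^2_s(X^2_{s-},z)$. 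A direct calculation yields the scale-matched derivative bounds $|\nabla F_\delta(y)| \leq 2/\delta$, $|\nabla F_\delta(y)|\,|y| \leq C$, $|D^2 F_\delta(y)| \leq C/\delta^2$, $|D^2 F_\delta(y)|\,|y|^2 \leq C$, and $|D^4 F_\delta(y)| \leq C/\delta^4$. Taylor-expanding the jump compensator to second order with a fourth-order remainder then gives integrands of size $\delta^{-2}|\Delta g|^2 + \delta^{-4}|\Delta g|^4$, which accounts for the $\delta^{-2}$ and $\delta^{-4}$ factors in \eqref{theo1}.

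Next, I split both the drift integrand and $\Delta g$ as a regularity piece plus a coefficient-difference piece, e.g. $b^1(X^1) - b^2(X^2) = (b^1(X^1) - b^1(X^2)) + (b^1(X^2) - b^2(X^2))$. For the regularity pieces, Lemma \ref{lem}(i) gives pointwise bounds $|b^1(X^1) - b^1(X^2)| \leq C_d |Y|(M|\nabla b^1|(X^1) + M|\nabla b^1|(X^2))$ and similarly for $g^1$. Pairing these with the scale-matched bounds $|\nabla F_\delta(y)|\,|y| \leq C$ and $|D^k F_\delta(y)|\,|y|^k \leq C$ absorbs the $\delta^{-k}$ factors and yields integrands uniformly bounded in $\delta$ times maximal functions of $\nabla b^1$ or $\nabla g^1$; this produces the $C'(\sum_i \|\rho^i\|_{L^\infty(L^q)})$ block of \eqref{theo1}. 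For the pure-difference pieces $b^1(X^2) - b^2(X^2)$ and $g^1(X^2,z) - g^2(X^2,z)$, the $\delta^{-k}$ factors persist, the integrand depends only on $X^2$, and we obtain the $C\|\rho^2\|_{L^\infty(L^q)}$ block.

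Taking expectations converts pointwise integrands $\varphi(X^i_s)$ into spatial integrals $\int \varphi\, \rho^i_s\, \mathrm{d}x$. H\"older's inequality with $1/p + 1/q = 1$, together with the $L^p$-boundedness of the maximal operator from Lemma \ref{lem}(ii), then produces the mixed norms $\|\nabla b^1\|_{L^1(L^p)}$, $\|\nabla g^1\|_{L^2(L^{2p})}^2$, $\|\nabla g^1\|_{L^4(L^{4p})}^4$ and their $b^1-b^2$, $g^1-g^2$ counterparts; the temporal norms $L^2$ and $L^4$ in $t$ arise naturally because the compensator integrand was squared or raised to the fourth power. Taking the infimum over $\pi_0$ upgrades $\mathbb{E}[F_\delta(X^1_0 - X^2_0)]$ to $\tilde{\mathcal{D}}_\delta(\mu^1_0,\mu^2_0)$. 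The main obstacle I anticipate is the uniform-in-$\delta$ control of the jump compensator: neither the crude bound $\|D^2 F_\delta\|_\infty \sim \delta^{-2}$ alone nor a pure first-order expansion closes the estimate, so one must carry the Taylor remainder to fourth order and, for the regularity piece, use the refined bound $|D^2 F_\delta(y)|\,|y|^2 \leq C$ to cancel the $\delta^{-2}$ factor against $|Y|^2$ produced by Lemma \ref{lem}(i); keeping track of these cancellations simultaneously for the drift, the second-order jump term, and the fourth-order jump remainder is the delicate part of the argument.
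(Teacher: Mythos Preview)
Your proposal is addressed to the wrong statement. The statement under consideration is Lemma~\ref{lem}, the two classical harmonic-analysis facts about the Hardy--Littlewood maximal function: the pointwise Morrey-type inequality~\eqref{lem1} for $W^{1,1}_{\mathrm{loc}}$ functions, and the $L^p$-boundedness of $M$ for $p>1$. What you have written is instead a sketch of the proof of Theorem~\ref{theo} (the Kantorovich--Rubinstein distance estimate), in which Lemma~\ref{lem} appears only as a tool. None of the steps you describe --- superposition principle, coupling of SDEs, It\^o's formula for $F_\delta(Y_t)$, Taylor expansion of the jump compensator --- has any bearing on establishing \eqref{lem1} or the maximal inequality.

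For the record, the paper does not prove Lemma~\ref{lem} either: it simply cites \cite[Lemma~3.1]{LL19} and \cite[Lemmas~3.5, 3.7]{Z13} and states that the result is well known. A direct proof of part~(i) proceeds by writing $\varphi(x)-\varphi(y)$ as an integral of $\nabla\varphi$ along the segment $[x,y]$ (after mollification), averaging over a suitable family of paths or balls, and recognizing the resulting averages as controlled by $M|\nabla\varphi|$ at the two endpoints; part~(ii) is the classical Hardy--Littlewood maximal theorem, obtained from the weak-$(1,1)$ bound via Vitali covering plus Marcinkiewicz interpolation with the trivial $L^\infty$ bound. If your intention was actually to prove Theorem~\ref{theo}, your outline is broadly in line with the paper's argument, but that is a separate matter from the statement you were asked to address.
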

\begin{remark}
If $\varphi$ is continuous, then \eqref{lem1} holds for all $x, y \in \mathbb{R}^{d}$ (cf. \cite[Appendix]{FL10}).
\end{remark}

Let $\mathbb{D}_T=\mathbb{D}\left([0, T], \mathbb{R}^{d}\right)$ be the space of all $\mathbb{R}^{d}$-valued c\`adl\`ag functions on $[0, T]$, endowed with the Skorokhod topology so that $\mathbb{D}_T$ becomes a Polish space. Let $X_{t}(\omega)=\omega_{t}$ be the canonical process. For $t \in [0, T]$, let $\mathcal{B}_{t}^{0}(\mathbb{D}_T)$ denote the natural filtration generated by $\left(X_{s}\right)_{s \in[0, t]}$, and let
\begin{equation*}
\mathcal{B}_{t}:=\mathcal{B}_{t}(\mathbb{D}_T):=\cap_{s \geq t} \mathcal{B}_{t}^{0}(\mathbb{D}_T).
\end{equation*}
Now we recall the notion of martingale solutions associated with $\mathscr{L}_{t}$ in the sense of Stroock-Varadhan \cite{SV06}.
\begin{definition}
{(Martingale solution)}
Let $\mu_{0} \in \mathcal{P}(\mathbb{R}^{d})$. We call a probability measure $\mathbb{P}_{\mu_0} \in \mathcal{P}(\mathbb{D}_T)$ a martingale solution to SDE \eqref{SDE} with initial distribution $\mu_{0}$ if $\mathbb{P}_{\mu_0} \circ X_{0}^{-1}=\mu_{0}$, and for every $f \in C_{b}^{1, 2}\left([0, T]\times\mathbb{R}^{d}\right)$, $M_{t}^{f}$ is a $\mathcal{B}_{t}$-martingale under $\mathbb{P}_{\mu_0}$, where
\begin{equation*}
M_{t}^{f}:=f\left(t, X_{t}\right)-f\left(0, X_{0}\right)-\int_{0}^{t} (\partial_sf+\mathscr{L}f)\left(s, X_{s}\right) \mathrm{d}s.
\end{equation*}
\end{definition}

\begin{definition}
{(Weak solution)}
Let $\mu_{0} \in \mathcal{P}\left(\mathbb{R}^{d}\right)$. The SDE \eqref{SDE} is said to have a weak solution with initial law $\mu_{0}$ if there exists a filtered probability space $(\Omega, \mathscr{F}_{t}, (\mathscr{F}_{t})_{0 \leq t \leq T}, P)$, on which there are defined an $\mathscr{F}_{t}$-adapted continuous process $X:=(X_{t})_{0 \leq t \leq T}$ taking values in $\mathbb{R}^{d}$ and an $\mathscr{F}_{t}$-adapted Poisson martingale measure $\tilde{N}(\mathrm{d} t, \mathrm{d} z)$ with the given compensator $\mathrm{d} t \nu(\mathrm{d} z)$ such that, $X_{0}$ is distributed as $\mu_{0}$ and a.s.,
\begin{equation}\label{SDE1}
X_{t}=X_{0}+\int_{0}^{t} b_{s}\left(X_{s}\right) \mathrm{d} s+\int_{0}^{t}\int_{|z|<R}g_{s}\left(X_{s-}, z\right) \tilde{N}(\mathrm{d} s, \mathrm{d}z), \quad \text{ for all } t \in[0, T] .
\end{equation}
We denote this solution by $\left(\Omega, \mathscr{F}_{t}, (\mathscr{F}_{t})_{0 \leq t \leq T}, P ; X, \tilde{N}\right)$.
\end{definition}

The following result states the equivalence between weak solutions and martingale solutions, as in \cite[Theorem 5.6]{XZ13}.
\begin{proposition}\label{prop1}
The existence of martingale solutions implies the existence of weak solutions. In particular, the uniqueness of weak solutions implies the uniqueness of martingale solutions.
\end{proposition}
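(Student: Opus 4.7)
The plan is to follow the classical Stroock--Varadhan equivalence argument adapted to the jump-diffusion setting, essentially along the lines of Ikeda--Watanabe and Jacod. Given a martingale solution $\mathbb{P}_{\mu_0} \in \mathcal{P}(\mathbb{D}_T)$, I would take the canonical setup $(\Omega,\mathscr{F},P) = (\mathbb{D}_T, \mathscr{B}(\mathbb{D}_T), \mathbb{P}_{\mu_0})$ with $\mathscr{F}_t = \mathscr{B}_t$ and the canonical process $X_t(\omega) = \omega_t$, and build a compensated Poisson random measure $\tilde{N}$ on a possibly enlarged probability space so that \eqref{SDE1} holds.

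First, I would extract the semimartingale characteristics of $X$ under $\mathbb{P}_{\mu_0}$ from the martingale problem. Applying the defining relation to test functions of the form $f(x) = \phi(x)$ with $\phi \in C_c^2(\mathbb{R}^d)$, and then to truncations that isolate the pure-jump part, one identifies that the random jump measure
\begin{equation*}
\mu^X(\mathrm{d}s,\mathrm{d}y) := \sum_{s\in[0,T]} \mathbf{1}_{\Delta X_s \neq 0}\,\delta_{(s,\Delta X_s)}(\mathrm{d}s,\mathrm{d}y)
\end{equation*}
has predictable compensator $\bigl(g_s(X_{s-},\cdot)\bigr)_{*}\nu(\mathrm{d}y)\,\mathrm{d}s$ on $[0,T]\times(\mathbb{R}^d\setminus\{0\})$, while the continuous finite-variation part of $X$ is $\int_0^t b_s(X_s)\,\mathrm{d}s$. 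This essentially unpacks the decomposition of $\mathscr{L}^b + \mathscr{L}_\nu^g$ encoded in the martingale property.

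Second, to realise the jumps of $X$ as a stochastic integral against a Poisson random measure with the prescribed intensity $\mathrm{d}t\,\nu(\mathrm{d}z)$, I would enlarge the probability space. On an auxiliary $(\Omega',\mathscr{F}',P')$ carrying an independent Poisson random measure $N'$ on $[0,T]\times\{|z|<R\}$ with intensity $\mathrm{d}t\,\nu(\mathrm{d}z)$, define $\tilde\Omega := \Omega\times\Omega'$ with the product measure and the naturally enlarged filtration. Using a measurable selection for the (not necessarily injective) map $z \mapsto g_s(X_{s-},z)$, assign to each jump time $s$ of $X$ a mark $z_s$ with $g_s(X_{s-},z_s) = \Delta X_s$; for the ``unused'' marks, keep the thinning of $N'$ corresponding to $z$'s whose jump $g_s(X_{s-},z)$ was not realised in $X$. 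Superposing these two pieces yields a random measure $N$ on $[0,T]\times\{|z|<R\}$. By construction, the jumps of $X$ are recovered as $\int_{|z|<R} g_s(X_{s-},z)\, N(\mathrm{d}s,\mathrm{d}z)$, and the predictable compensator matches $\mathrm{d}s\,\nu(\mathrm{d}z)$ because the compensator of $\mu^X$ is the pushforward of $\mathrm{d}s\,\nu(\mathrm{d}z)$. Setting $\tilde{N}:=N - \mathrm{d}s\,\nu(\mathrm{d}z)$ and combining the drift identification from the first step with the L\'evy--It\^o decomposition then yields \eqref{SDE1} on $\tilde\Omega$. For the second assertion, if two martingale solutions shared the initial law but differed, the construction above would produce two weak solutions with the same $\mu_0$; uniqueness of weak solutions would force their laws on $\mathbb{D}_T$ to agree, hence the martingale solutions themselves to coincide.

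The main obstacle I anticipate is the measurable selection underlying the construction of $N$ in the second step: $g_s(X_{s-},\cdot)$ need not be injective, so recovering a mark $z$ from a jump $\Delta X_s$ requires a genuine measurable selection theorem and a careful verification that the resulting $N$ is Poisson with the correct compensator with respect to the enlarged filtration. This is where the enlargement by an independent $N'$ is essential: it supplies the ``randomisation'' needed to turn the pushed-forward jump measure of $X$ back into a Poisson random measure on the prescribed mark space. Once this construction is in place, verifying \eqref{SDE1} and deducing the uniqueness consequence is routine.
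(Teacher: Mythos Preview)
The paper does not actually prove this proposition: it is stated without proof and simply attributed to \cite[Theorem 5.6]{XZ13}. So there is no ``paper's own proof'' to compare against; the authors treat the result as known.

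Your outline is the standard construction underlying that cited result (and, more generally, the Ikeda--Watanabe/Jacod representation theorem for semimartingales with jumps). The two-step plan---read off the semimartingale characteristics of the canonical process from the martingale problem, then enlarge the probability space by an independent Poisson random measure and use a measurable selection to manufacture $N$ with the prescribed intensity---is exactly how one builds a weak solution from a martingale solution in this setting. You have also correctly flagged the genuine technical point: non-injectivity of $z\mapsto g_s(X_{s-},z)$ forces a randomised inverse, which is why the auxiliary $N'$ is needed and why one must check carefully that the superposed measure is Poisson with compensator $\mathrm{d}s\,\nu(\mathrm{d}z)$ relative to the enlarged filtration. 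As a proof \emph{plan} this is sound; turning it into a complete proof would require either invoking the representation theorem directly (e.g.\ Ikeda--Watanabe, Chapter II, or El~Karoui--Lepeltier) or reproducing the measurable-selection and compensator computations in full, which is substantial. Since the paper itself is content to cite the result, your level of detail already exceeds what the paper provides.
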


We also need the following extension of Figalli's results \cite{AF08}. For the following superposition principle, see \cite{FX19}.
\begin{proposition}\label{prop2}
Let $\mu_{t} \in \mathcal{P}\left(\mathbb{R}^{d}\right)$ be a measure-valued weak solution to equation \eqref{FPE} with initial value $\mu_{0} \in \mathcal{P}\left(\mathbb{R}^{d}\right)$, that is, it satisfies \eqref{def1.1} and \eqref{def1.2}. Then, there exists a martingale solution $P_{\mu_{0}}$ to SDE \eqref{SDE} with initial law $\mu_{0}$ such that for all $t \in[0, T]$, and for all $\varphi \in C_{c}^{\infty}\left(\mathbb{R}^{d}\right)$,
\begin{equation*}
\int_{\mathbb{R}^{d}} \varphi(x) \mathrm{d} \mu_{t}(x)=\int_{\mathbb{D}_T} \varphi\left(w_{t}\right) \mathrm{d} P_{\mu_{0}}(w).
\end{equation*}
\end{proposition}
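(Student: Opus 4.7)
My strategy is to lift the comparison of Fokker--Planck solutions to a comparison of coupled stochastic processes. By Propositions \ref{prop1} and \ref{prop2}, each $\mu_t^i$ can be realized as the one-time law of a weak solution $X_t^i$ of SDE \eqref{SDE} with coefficients $(b^i,g^i)$. The decisive step is to place the two processes on a common filtered probability space driven by the \emph{same} compensated Poisson measure $\tilde N(dt,dz)$, with joint initial law $\pi_0\in\mathcal C(\mu_0^1,\mu_0^2)$ chosen to be nearly optimal for $\tilde{\mathcal D}_\delta(\mu_0^1,\mu_0^2)$. Setting $U_t:=X_t^1-X_t^2$ and $\phi(u):=\log(1+|u|^2/\delta^2)$, I then apply the It\^o formula for jump processes to $\phi(U_t)$, take expectation (after a standard localization at exit times of growing balls, with the $\mathcal P_{\log}$ initial assumption supplying uniform integrability in the limit), and finally infimize over $\pi_0$ on the left-hand side to recover $\tilde{\mathcal D}_\delta(\mu_t^1,\mu_t^2)$.

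The drift contribution is estimated using the identities $\nabla\phi(u)\cdot u=2|u|^2/(\delta^2+|u|^2)\le 2$ and $|\nabla\phi(u)|\le 1/\delta$. Splitting $b^1(X^1)-b^2(X^2)=[b^1(X^1)-b^1(X^2)]+[b^1(X^2)-b^2(X^2)]$, Lemma \ref{lem}(i) bounds the first summand by $C|U|(M|\nabla b^1|(X^1)+M|\nabla b^1|(X^2))$, and pairing with $\nabla\phi(U)$ absorbs the $|U|$ factor; H\"older against $\rho^i\in L^q$ together with the $L^p$ maximal inequality in Lemma \ref{lem}(ii) then yields the $\|\nabla b^1\|_{L^1(L^p)}$ term of \eqref{theo1}. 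For the second summand, $|\nabla\phi(U)|\le 1/\delta$ and H\"older against $\rho^2\in L^q$ produce the $\tfrac1\delta\|b^1-b^2\|_{L^1(L^p)}$ term.

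The main obstacle is the compensator piece $\int_0^t\!\int_{|z|<R}\mathbb E\bigl[\phi(U_s+\Delta)-\phi(U_s)-\Delta\cdot\nabla\phi(U_s)\bigr]\nu(dz)\,ds$ with $\Delta=g^1(X_s^1,z)-g^2(X_s^2,z)$, where the nonlocality forces a careful Taylor analysis of $\phi$. The sharp bound $\phi(U+\Delta)-\phi(U)-\Delta\cdot\nabla\phi(U)\le|\Delta|^2/(\delta^2+|U|^2)$, obtained from $\log(1+y)\le y$ with $y=(2U\cdot\Delta+|\Delta|^2)/(\delta^2+|U|^2)$, already supplies the $|U|^2$ denominator needed to cancel the factor $|U|$ that Lemma \ref{lem}(i) produces in the Lipschitz-type piece $\Delta_1:=g^1(X^1,z)-g^1(X^2,z)$. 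To capture in addition the $L^4$ contributions in \eqref{theo1}, I would push the Taylor expansion one step further using $|\nabla^3\phi(u)|\lesssim 1/(\delta^2+|u|^2)^{3/2}$ together with the Young splitting $|\Delta|^3/(\delta^2+|U|^2)^{3/2}\le\tfrac12 |\Delta|^2/(\delta^2+|U|^2)+\tfrac12 |\Delta|^4/(\delta^2+|U|^2)^2$. Decomposing $\Delta=\Delta_1+[g^1(X^2,z)-g^2(X^2,z)]$, the $\Delta_1$ piece has its $|U|^2$ (resp.\ $|U|^4$) factor cancel the denominator, leaving second (resp.\ fourth) powers of the maximal function $M|\nabla g^1(\cdot,z)|$; H\"older against $\rho^i\in L^q$ combined with the $L^{2p}$ (resp.\ $L^{4p}$) maximal inequality then absorbs these into the $\|\nabla g^1\|_{L^2(L^{2p})}^2$ and $\|\nabla g^1\|_{L^4(L^{4p})}^4$ contributions. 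The second piece, not involving $U$, directly yields the $\tfrac1{\delta^2}\|g^1-g^2\|_{L^2(L^{2p})}^2$ and $\tfrac1{\delta^4}\|g^1-g^2\|_{L^4(L^{4p})}^4$ contributions. The assumption $\Gamma_{j,R}^{2,2p}(g^i)+\Gamma_{j,R}^{4,4p}(g^i)<\infty$ validates the Fubini exchange between $\nu(dz)$ and $ds$, and the compensated stochastic integral $\int\!\int[\phi(U_-+\Delta)-\phi(U_-)]\tilde N(ds,dz)$ has vanishing expectation under the localization, completing the estimate.
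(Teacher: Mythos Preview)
Your proposal does not address the stated Proposition~\ref{prop2} at all. Proposition~\ref{prop2} is the \emph{superposition principle}: given a measure-valued weak solution $(\mu_t)_{t\in[0,T]}$ of the nonlocal Fokker--Planck equation, one must construct a martingale solution $P_{\mu_0}$ to the SDE whose one-time marginals coincide with $\mu_t$. The paper does not prove this result itself; it is quoted from \cite{FX19} (an extension of Figalli's superposition principle \cite{AF08} to the jump setting). A genuine proof would require the machinery of that paper---tightness of approximate martingale problems, identification of the limit, and the characterization of the marginals---none of which appears in your write-up.

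What you have written is instead a proof sketch for \emph{Theorem~\ref{theo}}, the Kantorovich--Rubinstein estimate, and you explicitly invoke Proposition~\ref{prop2} as a black box in your very first paragraph (``By Propositions~\ref{prop1} and~\ref{prop2}, each $\mu_t^i$ can be realized\ldots''). If your intention was to prove Theorem~\ref{theo}, then your outline is essentially the paper's own argument in Section~\ref{sec4}: common-probability-space coupling via Proposition~\ref{prop3}, It\^o's formula applied to $\log(1+|U_t|^2/\delta^2)$, the same drift splitting with the maximal-function Lemma~\ref{lem}, and a Taylor-type bound on the jump compensator producing both quadratic and quartic terms in $|\Delta|/(\delta^2+|U|^2)^{1/2}$. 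The only noteworthy deviation is in the compensator estimate: the paper obtains the $L^2$ and $L^4$ pieces in one stroke from $|\log(1+r)-r|\le C r^2$ applied with $r=(2\langle Z,G\rangle+|G|^2)/(|Z|^2+\delta^2)$, whereas you propose first $\log(1+y)\le y$ and then a separate third-order expansion plus Young's inequality; both routes land on the same terms.
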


This following result can be proved by following the ideas in the proof of  \cite[Proposition 3.6]{LL19}.
\begin{proposition}\label{prop3}
For each $i \in\{1,2\}$, let $\left(\Omega^{i}, \mathscr{F}^{i},\left(\mathscr{F}_{t}^{i}\right)_{0 \leq t \leq T}, P^{i} ; X^{i}, \tilde{N}\right)$ be a weak solution to SDE \eqref{SDE} with corresponding coefficients $b^{i}$ and $g^{i}$, having the initial law $\mu_{0}^{i} \in \mathcal{P}\left(\mathbb{R}^{d}\right) .$ Then, for any $\pi \in \mathcal{C}\left(\mu_{0}^{1}, \mu_{0}^{2}\right)$, there exist a filtered probability space $\left(\Omega, \mathscr{F},\left(\mathscr{F}_{t}\right)_{0 \leq t \leq T}, P\right)$ and two $\mathbb{R}^{d}$-valued $\mathscr{F}_{t}$-adapted continuous processes $Y^{1}$ and $Y^{2}$, such that $\left(Y_{0}^{1}, Y_{0}^{2}\right)$ is distributed as $\pi$, and for each $i \in\{1,2\}$,  \\
(1) $\left(X^{i}, \tilde{N}\right)$ and $\left(Y^{i}, \tilde{N}\right)$ have the same distribution;\\
(2) $\left(\Omega, \mathscr{F}, \left(\mathscr{F}_{t}\right)_{0 \leq t \leq T}, P ; Y^{i}, \tilde{N}\right)$ is a weak solution of SDE \eqref{SDE} with coefficients $b^{i}$ and $g^{i}$.
\end{proposition}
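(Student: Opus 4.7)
My plan is to adapt the disintegration-and-gluing coupling construction of Li and Luo \cite[Proposition 3.6]{LL19} from the Brownian setting to the present Poisson-random-measure setting. The two weak solutions $X^1, X^2$ a priori live on different probability spaces and are driven by independent copies of $\tilde{N}$; the goal is to realize them jointly on a single filtered probability space driven by one common compensated Poisson random measure $\tilde{N}$, with the prescribed joint initial law $\pi$.

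\textbf{Construction via disintegration.} For each $i \in \{1,2\}$, view the weak solution as the canonical process on the Polish space $\mathbb{D}_T \times \mathcal{M}$, where $\mathcal{M}$ is the space of integer-valued Radon measures on $[0,T] \times \{|z|<R\}$ endowed with the vague topology, and let $\mathbb{Q}^i \in \mathcal{P}(\mathbb{D}_T \times \mathcal{M})$ be the joint law of $(X^i, \tilde{N})$. Since $X^i_0 \sim \mu_0^i$ is independent of $\tilde{N}$, whose law $P_N$ is common to $i=1,2$, Polishness yields Borel-measurable regular conditional kernels $K^i : \mathbb{R}^d \times \mathcal{M} \to \mathcal{P}(\mathbb{D}_T)$ satisfying
\begin{equation*}
\mathbb{Q}^i(dw, dn) \;=\; \mu_0^i(dx_0)\, P_N(dn)\, K^i(x_0, n; dw),
\end{equation*}
with $w_0 = x_0$ almost surely under $K^i(x_0, n; \cdot)$. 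Set $\Omega := \mathbb{R}^d \times \mathbb{R}^d \times \mathcal{M} \times \mathbb{D}_T \times \mathbb{D}_T$ with its Borel $\sigma$-algebra, and define
\begin{equation*}
P(dx_0^1, dx_0^2, dn, dw^1, dw^2) \;:=\; \pi(dx_0^1, dx_0^2)\, P_N(dn)\, K^1(x_0^1, n; dw^1)\, K^2(x_0^2, n; dw^2).
\end{equation*}
Declare $Y^i := w^i$, $Y^i_0 := x_0^i$, and $\tilde{N}$ to be the projection onto $\mathcal{M}$. By Fubini, $(Y_0^1, Y_0^2) \sim \pi$, and integrating out the coordinates with index $j \neq i$ shows that the joint law of $(Y^i, \tilde{N})$ under $P$ equals $\mathbb{Q}^i$, which is precisely conclusion~(1).

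\textbf{Filtration, SDE, and main obstacle.} Let $(\mathscr{F}_t)$ be the $P$-augmentation of $\sigma(Y^1_s, Y^2_s, \tilde{N}|_{[0,t]} : s \le t)$. For conclusion~(2), one must verify that $\tilde{N}$ remains a compensated Poisson random measure of intensity $dt\,\nu(dz)$ with respect to this enlarged filtration, and that the integral identity \eqref{SDE1} continues to hold on $(\Omega, \mathscr{F}, (\mathscr{F}_t), P)$. The SDE itself is automatically preserved, since it is a statement about the joint law of the driver pair $(Y^i, \tilde{N})$, which matches that of the original weak solution. The delicate point, and the main technical obstacle, is the Poisson property under the joint filtration: individually, $\tilde{N}|_{(t,T]}$ is independent of each $\sigma(Y^i_s, \tilde{N}|_{[0,t]} : s \le t)$, inherited from the identity $(Y^i, \tilde{N}) \stackrel{d}{=} (X^i, \tilde{N})$ and the weak-solution property on $\Omega^i$, but one must upgrade this to independence from the joint $\sigma$-algebra generated by both $Y^1$ and $Y^2$. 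Here the product form of $P$ is essential: it forces $Y^1$ and $Y^2$ to be conditionally independent given $(Y_0^1, Y_0^2, \tilde{N})$, from which the required joint independence, and hence the Poisson-measure property with respect to $(\mathscr{F}_t)$, follows by a Laplace-functional (or equivalent martingale-characterization) argument. This conditional-independence step is precisely why the gluing is performed along the common $\tilde{N}$-marginal rather than through a naive product of the two weak solutions.
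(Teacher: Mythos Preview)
Your approach is correct and is exactly the one the paper indicates: the paper gives no self-contained argument but simply refers to \cite[Proposition 3.6]{LL19}, and your disintegration-and-gluing construction is the faithful adaptation of that proof to the Poisson-driven setting. Two minor technical remarks: (i) $\tilde{N}$ is the \emph{compensated} measure and hence not integer-valued, so your $\mathcal{M}$ should carry the law of $N$ rather than $\tilde{N}$ (harmless, since the compensator is deterministic and the two generate the same filtration); (ii) the crucial step hidden in your last paragraph is that under each $\mathbb{Q}^i$ the kernel $\int K^i(x_0,n;dw)\,G(w|_{[0,t]})$ depends $\mu_0^i\otimes P_N$-a.s.\ only on $n|_{[0,t]}$, which is precisely where the weak-solution property $\tilde{N}|_{(t,T]}\perp\sigma(X^i_s,\tilde{N}|_{[0,t]}:s\le t)$ enters---make this explicit when you write it out.
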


\begin{lemma}
 Let $\left(\mu_{t}\right)_{t \in[0, T]}$ be a solution to \eqref{FPE}.\\
(1) If $\int_{\mathbb{R}^{d}}|x| \mathrm{d} \mu_{0}(x)<+\infty$, then $\sup _{0 \leq t \leq T} \int_{\mathbb{R}^{d}}|x| \mathrm{d} \mu_{t}(x)<+\infty$;\\
(2) If $\int_{\mathbb{R}^{d}} \log \left(1+|x|^{2}\right) \mathrm{d} \mu_{0}(x)<+\infty$, then $\sup _{0 \leq t \leq T} \int_{\mathbb{R}^{d}} \log \left(1+|x|^{2}\right) \mathrm{d} \mu_{t}(x)<+\infty$.
\end{lemma}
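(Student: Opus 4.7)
The plan is to use the superposition principle to replace the measure-valued solution $(\mu_t)$ by a martingale (and hence weak) solution of \eqref{SDE}, and then apply the It\^o formula with test functions whose gradient and Hessian are globally bounded, so that the moment bounds reduce to the integrability hypothesis \eqref{def1.1}.

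First I would invoke Propositions \ref{prop2} and \ref{prop1} to produce a filtered probability space $(\Omega, \mathscr{F}, (\mathscr{F}_t), P)$ carrying a weak solution $(X_t)_{t\in[0,T]}$ of \eqref{SDE} with initial law $\mu_0$ and such that the one-dimensional marginals satisfy $\mu_t = P\circ X_t^{-1}$. It then suffices to prove $\sup_{t\in[0,T]} \mathbb{E}\,\varphi(X_t)<\infty$ for the appropriate test function $\varphi$, since $\int_{\mathbb{R}^d}\varphi\,d\mu_t = \mathbb{E}\,\varphi(X_t)$.

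For part (2), I take $\varphi(x)=\log(1+|x|^2)$. A direct computation gives $|\nabla\varphi(x)| = 2|x|/(1+|x|^2)\le 1$ and $\|D^2\varphi\|_\infty \le C$. Applying the It\^o formula to $\varphi(X_t)$, localizing by the stopping times $\tau_n = \inf\{t\ge 0:|X_t|\ge n\}\wedge T$, and taking expectations kills the $\tilde N$-stochastic integral and yields
\begin{equation*}
\mathbb{E}\,\varphi(X_{t\wedge\tau_n}) = \mathbb{E}\,\varphi(X_0) + \mathbb{E}\int_0^{t\wedge\tau_n} \bigl(\mathscr{L}^b+\mathscr{L}_\nu^g\bigr)\varphi(X_s)\,ds.
\end{equation*}
The drift integrand is bounded by $|b_s(X_s)|$ since $|\nabla\varphi|\le 1$, while Taylor's theorem gives $|\mathscr{L}_\nu^g \varphi(x)| \le \tfrac12\|D^2\varphi\|_\infty \int_{|z|<R}|g_s(x,z)|^2\nu(dz)$. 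Rewriting the expectations against $\mu_s$, the right-hand side is dominated by $\int_{\mathbb{R}^d}\varphi\,d\mu_0 + \int_0^T\!\int_{\mathbb{R}^d}\bigl(|b_s|+C\int_{|z|<R}|g_s|^2\nu(dz)\bigr)\,d\mu_s ds$, which is finite by assumption and \eqref{def1.1}; Fatou's lemma as $n\to\infty$ then delivers the uniform bound. Part (1) is handled identically with the smooth surrogate $\varphi(x) = \sqrt{1+|x|^2}$, which satisfies $|x|\le \varphi(x)\le 1+|x|$, $|\nabla\varphi|\le 1$, and $\|D^2\varphi\|_\infty <\infty$, so the same computation applies and returns a bound on $\sup_t \mathbb{E}|X_t|$.

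I expect the main obstacle to be the rigorous removal of the localization in the It\^o step: one must verify that the compensated small-jump integrand is integrable uniformly in $n$, which is precisely where the quadratic Taylor bound on $\varphi(x+y)-\varphi(x)-y\cdot\nabla\varphi(x)$ combines with hypothesis \eqref{def1.1} to deliver the required dominated-convergence control. Once the test functions are pinned down with globally bounded first and second derivatives, every other step is a routine application of Fubini and monotone/dominated convergence.
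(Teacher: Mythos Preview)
Your proposal is correct and, for part (2), essentially identical to the paper's argument: both apply It\^o's formula to $\log(1+|x|^2)$, bound the drift contribution by $|b_s|$ via $|\nabla\varphi|\le 1$, and control the jump compensator through the second-order Taylor estimate $|\varphi(x+y)-\varphi(x)-y\cdot\nabla\varphi(x)|\le C|y|^2$, so that everything is dominated by the hypothesis and \eqref{def1.1}. Your localization-plus-Fatou step is just a more careful rendering of what the paper does by checking directly that the quadratic variation of the $\tilde N$-integral is finite.

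The only genuine difference is in part (1). The paper does not use It\^o there: it writes $|X_t|\le |X_0|+\int_0^t|b_s(X_s)|\,ds+\bigl|\int_0^t\!\int_{|z|<R}g_s\,\tilde N\bigr|$, takes expectations, and bounds the stochastic-integral term by Cauchy--Schwarz and the $L^2$ isometry, $\mathbb{E}|M_t|\le\bigl(\mathbb{E}\int_0^t\!\int|g_s|^2\,\nu(dz)\,ds\bigr)^{1/2}$. Your route through the smooth surrogate $\sqrt{1+|x|^2}$ also works and has the virtue of treating both parts with one mechanism; the paper's route is slightly more direct and sidesteps localization for this part.
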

\begin{proof}
According to Propositions \ref{prop1} and \ref{prop2}, if $\left(\mu_{t}\right)_{t \in[0, T]}$ is a weak solution to the non-local Fokker-Planck equation \eqref{FPE} with initial value $\mu_{0}$, then there exists a weak solution $\left(\Omega, \mathscr{F},\left(\mathscr{F}_{t}\right)_{0 \leq t \leq T}, P ; X, \tilde{N}\right)$ to the SDE \eqref{SDE} such that law$\left(X_{0}\right)=\mu_{0}$ and
\eqref{SDE1} holds a.s.\\
(i) Since the solution $\left(\mu_{t}\right)_{0 \leq t \leq T}$ of the non-local Fokker-Planck equation \eqref{FPE} satisfies \eqref{def1.1}, the stochastic integral in \eqref{SDE1} is a square integrable martingale. Moreover, $\mathbb{E}\left|\int_{0}^{t}\int_{|z|<R}g_{s}\left(X_{s-}, z\right) \tilde{N}(\mathrm{d}s, \mathrm{d}z)\right|^2=\mathbb{E} \int_{0}^{t}\int_{|z|<R}\left|g_{s}(X_{s-}, z)\right|^2 \nu(\mathrm{d}z) \mathrm{d}s$ (see \cite[Lemma 2.4]{K04}). By the Cauchy-Schwarz inequality and the isometry formula,
\begin{equation*}
\begin{aligned}
\mathbb{E}\left|X_{t}\right| & \leq \mathbb{E}\left|X_{0}\right|+\mathbb{E}\left|\int_{0}^{t} b_{s}\left(X_{s}\right) \mathrm{d} s\right|+\mathbb{E}\left|\int_{0}^{t}\int_{|z|<R}g_{s}\left(X_{s-}, z\right) \tilde{N}(\mathrm{d} s, \mathrm{d}z)\right|\\
& \leq \int_{\mathbb{R}^{d}}|x| \mathrm{d} \mu_{0}(x)+\int_{0}^{t} \mathbb{E}\left|b_{s}\left(X_{s}\right)\right| \mathrm{d}s+\left[\mathbb{E} \int_{0}^{t}\int_{|z|<R}\left|g_{s}(X_{s-}, z)\right|^2 \nu(\mathrm{d}z) \mathrm{d}s\right]^{1 / 2} \\
& \leq \int_{\mathbb{R}^{d}}|x| \mathrm{d} \mu_{0}(x)+\int_{0}^{T} \int_{\mathbb{R}^{d}}\left|b_{s}(x)\right| \mathrm{d} \mu_{s}(x) \mathrm{d}s+\left[\int_{0}^{T} \int_{\mathbb{R}^{d}}\int_{|z|<R}\left|g_{s}(x, z)\right|^2 \nu(\mathrm{d}z) \mathrm{d} \mu_{s}(x) \mathrm{d} s\right]^{1 / 2}.
\end{aligned}
\end{equation*}
which is finite by \eqref{def1.1}. Therefore,
\begin{equation*}
\sup _{0 \leq t \leq T} \int_{\mathbb{R}^{d}}|x| \mathrm{d} \mu_{t}(x)=\sup _{0 \leq t \leq T} \mathbb{E}\left|X_{t}\right|<\infty.
\end{equation*}
(ii) By the It\^o formula,
\begin{equation*}
\begin{aligned}
\log \left(\left|X_{t}\right|^{2}+1\right)=&\log \left(\left|X_{0}\right|^{2}+1\right)+2\int_{0}^{t}\frac{\left\langle X_{s}, b_{s}(X_{s})\right\rangle}{\left|X_{s}\right|^{2}+1} \mathrm{d}s\\
&+\int_{0}^{t} \int_{|z|<R} \log \frac{|X_{s-}+g_s(X_{s-}, z)|^{2}+1}{|X_{s-}|^{2}+1} \tilde{N}(\mathrm{d}z, \mathrm{d}s)\\
&+\int_{0}^{t} \int_{|z|<R}\left(\log \frac{|X_{s-}+g_s(X_{s-}, z)|^{2}+1}{|X_{s-}|^{2}+1}-\frac{2\langle X_{s-}, g_s(X_{s-}, z)\rangle}{|X_{s-}|^{2}+1}\right) \nu(\mathrm{d}z) \mathrm{d}s.
\end{aligned}
\end{equation*}
The quadratic variation of the martingale part can be estimated as
\begin{equation*}
\begin{aligned}
& \mathbb{E} \int_{0}^{T} \int_{|z|<R} \left(\log \frac{|X_{t-}+g_t(X_{t-}, z)|^{2}+1}{|X_{t-}|^{2}+1}\right)^2 \nu(\mathrm{d}z)\mathrm{d}t \\
\leq & C \mathbb{E} \int_{0}^{T}\int_{|z|<R} \left|g_t(X_{t-}, z)\right|^{2} \nu(\mathrm{d}z)\mathrm{d}t \\
\leq & C \int_{0}^{T} \int_{\mathbb{R}^{d}}\int_{|z|<R}\left|g_{t}(x, z)\right|^2 \nu(\mathrm{d}z) \mathrm{d} \mu_{t}(x) \mathrm{d}t<\infty.
\end{aligned}
\end{equation*}
Then, we have
\begin{equation*}
\begin{aligned}
\mathbb{E}\log \left(\left|X_{t}\right|^{2}+1\right) \leq & \mathbb{E}\log \left(\left|X_{0}\right|^{2}+1\right)+2\mathbb{E}\int_{0}^{t}\frac{|\left\langle X_{s}, b_{s}(X_{s})\right\rangle|}{\left|X_{s}\right|^{2}+1} \mathrm{d}s\\
&+\mathbb{E}\int_{0}^{t} \int_{|z|<R}\left|\log \frac{|X_{s-}+g_s(X_{s-}, z)|^{2}+1}{|X_{s-}|^{2}+1}-\frac{2\langle X_{s-}, g_s(X_{s-}, z)\rangle}{|X_{s-}|^{2}+1}\right| \nu(\mathrm{d}z) \mathrm{d}s.
\end{aligned}
\end{equation*}
Let $k(x)=\log \left(x^2+1\right)$. By elementary calculations and Taylor's expansion, one has
\begin{equation*}
\left|k(x+y)-k(x)-y^{i} \partial_{i} k(x)\right| \leq \left|y^{i} y^{j} \partial_{i} \partial_{j} k\left(x+\theta_{1} y\right)\right|/2 \leq C|y|^2,
\end{equation*}
where C is a positive constant and $\theta_1 \in (0, 1)$. Thus, we obtain
\begin{equation*}
\begin{aligned}
\mathbb{E}\log \left(\left|X_{t}\right|^{2}+1\right) \leq &\mathbb{E}\log \left(\left|X_{0}\right|^{2}+1\right)+\int_{0}^{t} \mathbb{E}\left|b_{s}\left(X_{s}\right)\right| \mathrm{d}s+C\mathbb{E} \int_{0}^{t}\int_{|z|<R}\left|g_{s}(X_{s-}, z)\right|^2 \nu(\mathrm{d}z) \mathrm{d}s \\
\leq & \int_{\mathbb{R}^{d}} \log \left(1+|x|^{2}\right) \mathrm{d} \mu_{0}(x)+\int_{0}^{T} \int_{\mathbb{R}^{d}}\left|b_{s}(x)\right| \mathrm{d} \mu_{s}(x) \mathrm{d}s\\
&+C\int_{0}^{T} \int_{\mathbb{R}^{d}}\int_{|z|<R}\left|g_{s}(x, z)\right|^2 \nu(\mathrm{d}z) \mathrm{d} \mu_{s}(x) \mathrm{d}s.
\end{aligned}
\end{equation*}
This immediately implies the desired result.
\end{proof}

\section{Proof of the  Kantorovich-Rubinstein distance estimate }\label{sec4}

In this section, we present the proof of Theorem \ref{theo}.
\begin{proof}
By Proposition \ref{prop2}, there exists a martingale solution $P_{\mu_{0}^{i}}^{i}$ to SDE \eqref{SDE} with coefficients $b^{i}$ and $g^{i}$, $i=1,2$, and the initial probability distribution $\mu_{0}^{i}$ such that, for all $\varphi \in C_{c}^{\infty}\left(\mathbb{R}^{d}\right)$,
\begin{equation*}
\int_{\mathbb{R}^{d}} \varphi(x) \rho_{t}^{i}(x) \mathrm{d} x=\int_{\mathbb{D}_{T}} \varphi\left(w_{t}\right) \mathrm{d} P_{\mu_{0}^{i}}^{i}(w).
\end{equation*}
Applying Proposition \ref{prop1}, we obtain a weak solution $\left(\Omega^{i}, \mathscr{F}^{i},\left(\mathscr{F}_{t}^{i}\right)_{0 \leq t \leq T}, P^{i} ; X^{i}, \tilde{N}\right)$ to SDE \eqref{SDE} with coefficients $b^{i}$ and $g^{i}$, satisfying $P^i\circ\left(X^{i}\right)^{-1}=P_{\mu_{0}^{i}}^{i}$. Next, we can find $\pi_{\delta} \in \mathcal{C}\left(\mu_{0}^{1}, \mu_{0}^{2}\right)$ such that
\begin{equation}\label{dist}
\tilde{\mathcal{D}}_{\delta}\left(\mu_{0}^{1}, \mu_{0}^{2}\right)=\int_{\mathbb{R}^{d} \times \mathbb{R}^{d}} \log \left(1+\frac{|x-y|^{2}}{\delta^{2}}\right) \mathrm{d} \pi_{\delta}(x, y).
\end{equation}
Thus, it follows from Proposition \ref{prop3} that there exists a common filtered probability space $\left(\Omega, \mathscr{F}, \left(\mathscr{F}_{t}\right)_{0 \leq t \leq T}, P\right)$, on which there are defined two continuous $\mathscr{F}_{t}$-adapted processes $Y^{1}$ and $Y^{2}$ such that law$\left(Y_{0}^{1}, Y_{0}^{2}\right)=\pi_{\delta}$, and for $i=1,2$, $Y^{i}$ is distributed as $P_{\mu_{0}^{i}}^{i}$ on $\mathbb{D}_{T}$; moreover, it holds a.s. that
\begin{equation*}
Y_{t}^{i}=Y_{0}^{i}+\int_{0}^{t} b_{s}^{i}\left(Y_{s}^{i}\right) \mathrm{d} s+\int_{0}^{t}\int_{|z|<R}g^i_{s}\left(Y^i_{s-}, z\right) \tilde{N}(\mathrm{d}s, \mathrm{d}z), \quad \text { for all } t \in[0, T].
\end{equation*}

Define $Z_t=Y^1_t -Y^2_t$, $G_t(z)=g^1_t(Y^1_{t-}, z) - g^2_t(Y^2_{t-}, z)$ and fix $\delta > 0$. Then, applying It\^o's formula, we obtain
\begin{equation}\label{Ito1}
\begin{aligned}
\log \left(\frac{\left|Z_{t}\right|^{2}}{\delta^{2}}+1\right)=&\log \left(\frac{\left|Z_{0}\right|^{2}}{\delta^{2}}+1\right)+2\int_{0}^{t}\frac{\left\langle Z_{s}, b_{s}^{1}\left(Y_{s}^{1}\right)-b_{s}^{2}\left(Y_{s}^{2}\right)\right\rangle}{\left|Z_{s}\right|^{2}+\delta^{2}} \mathrm{d}s\\
&+\int_{0}^{t} \int_{|z|<R} \log \frac{|Z_{s-}+G_s(z)|^{2}+\delta^{2}}{|Z_{s-}|^{2}+\delta^{2}} \tilde{N}(\mathrm{d}z, \mathrm{d}s)\\
&+\int_{0}^{t} \int_{|z|<R}\left(\log \frac{|Z_{s-}+G_s(z)|^{2}+\delta^{2}}{|Z_{s-}|^{2}+\delta^{2}}-\frac{2Z_{s-}^*G_s(z)}{|Z_{s-}|^{2}+\delta^{2}}\right) \nu(\mathrm{d}z) \mathrm{d}s.
\end{aligned}
\end{equation}
Let $h(x)=\log \left(\frac{\left|x\right|^{2}}{\delta^{2}}+1\right)$. By elementary calculations, one has
\begin{equation*}
\left|\partial_{i} h(x)\right| = \frac{2x_i}{\delta^{2}+|x|^2}\leqslant C,
\end{equation*}
where C is a positive constant.
On the other hand, by Taylor's expansion, we have
\begin{equation}\label{Tay}
|h(x+y)-h(x)| \leqslant\left|y^{i} \partial_{i} h\left(x+\theta_2 y\right)\right| \leqslant C|y|,
\end{equation}
where $\theta_2 \in (0, 1)$.
The quadratic variation of the martingale part on $[0, T]$ is finite, since, by \eqref{Tay}
\begin{equation*}
\begin{aligned}
& \mathbb{E} \int_{0}^{T} \int_{|z|<R} \left(\log \frac{|Z_{t-}+G_t(z)|^{2}+\delta^{2}}{|Z_{t-}|^{2}+\delta^{2}}\right)^2 \nu(\mathrm{d}z)\mathrm{d}t \\
& \leq C \mathbb{E} \int_{0}^{T}\int_{|z|<R} \left|g_{t}^{1}(Y_{t-}^{1}, z) - g_{t}^{2}(Y_{t-}^{2}, z)\right|^{2} \nu(\mathrm{d}z)\mathrm{d}t \\
& \leq C \int_{0}^{T}\int_{|z|<R} \left(\int_{\mathbb{R}^{d}}\left|g_{t}^{1}(x, z)\right|^2 \mathrm{d} \mu_{t}^{1}(x)+\int_{\mathbb{R}^{d}}\left|g_{t}^{2}(x, z)\right|^2 \mathrm{d} \mu_{t}^{2}(x)\right) \nu(\mathrm{d}z)\mathrm{d} t<\infty.
\end{aligned}
\end{equation*}
Hence, the second line of the right-hand side of \eqref{Ito1} is a continuous martingale. Taking expectation on both sides of \eqref{Ito1} with respect to $P$ yields
\begin{equation*}
\begin{aligned}
\mathbb{E} \log \left(\frac{\left|Z_{t}\right|^{2}}{\delta^{2}}+1\right) & \leq \mathbb{E} \log \left(\frac{\left|Z_{0}\right|^{2}}{\delta^{2}}+1\right)+2 \mathbb{E} \int_{0}^{t} \frac{\left\langle Z_{s}, b_{s}^{1}\left(Y_{s}^{1}\right)-b_{s}^{2}\left(Y_{s}^{2}\right)\right\rangle}{\left|Z_{s}\right|^{2}+\delta^{2}} \mathrm{d} s \\
&+\mathbb{E}\int_{0}^{t} \int_{|z|<R}\left(\log \frac{|Z_{s-}+G_s(z)|^{2}+\delta^{2}}{|Z_{s-}|^{2}+\delta^{2}}-\frac{2Z_{s-}^*G_s(z)}{|Z_{s-}|^{2}+\delta^{2}}\right) \nu(\mathrm{d}z) \mathrm{d}s.
\end{aligned}
\end{equation*}
We notice that the joint distribution of $\left(Y_{t}^{1}, Y_{t}^{2}\right)$ belongs to $\mathcal{C}(\mu_{t}^{1}, \mu_{t}^{2})$, and deduce from \eqref{dist} that
\begin{equation}\label{I}
\begin{aligned}
\tilde{\mathcal{D}}_{\delta}\left(\mu_{t}^{1}, \mu_{t}^{2}\right) & \leq \tilde{\mathcal{D}}_{\delta}\left(\mu_{0}^{1}, \mu_{0}^{2}\right) +2 \mathbb{E} \int_{0}^{t} \frac{\left\langle Z_{s}, b_{s}^{1}\left(Y_{s}^{1}\right)-b_{s}^{2}\left(Y_{s}^{2}\right)\right\rangle}{\left|Z_{s}\right|^{2}+\delta^{2}} \mathrm{d} s \\
&+\mathbb{E} \int_{0}^{t} \int_{|z|<R}\left(\log \frac{|Z_{s-}+G_s(z)|^{2}+\delta^{2}}{|Z_{s-}|^{2}+\delta^{2}}-\frac{2Z_{s-}^*G_s(z)}{|Z_{s-}|^{2}+\delta^{2}}\right) \nu(\mathrm{d}z) \mathrm{d}s \\
&=: \tilde{\mathcal{D}}_{\delta}\left(\mu_{0}^{1}, \mu_{0}^{2}\right)+I_{1}+I_{2} .
\end{aligned}
\end{equation}
Below we shall estimate the two terms $I_{1}$ and $I_{2}$ separately.

\begin{description}
\item[An estimate of $I_{1}$:]
\end{description}

By the triangle inequality, we obtain
\begin{equation}
\begin{aligned}
I_{1} & \leq 2 \mathbb{E} \int_{0}^{t} \frac{\left|b_{s}^{1}\left(Y_{s}^{1}\right)-b_{s}^{2}\left(Y_{s}^{2}\right)\right|}{\sqrt{\left|Z_{s}\right|^{2}+\delta^{2}}} \mathrm{~d} s \\
& \leq 2 \mathbb{E} \int_{0}^{t} \frac{\left|b_{s}^{1}\left(Y_{s}^{1}\right)-b_{s}^{1}\left(Y_{s}^{2}\right)\right|}{\sqrt{\left|Z_{s}\right|^{2}+\delta^{2}}} \mathrm{~d} s+2 \mathbb{E} \int_{0}^{t} \frac{\left|b_{s}^{1}\left(Y_{s}^{2}\right)-b_{s}^{2}\left(Y_{s}^{2}\right)\right|}{\sqrt{\left|Z_{s}\right|^{2}+\delta^{2}}} \mathrm{~d} s \\
&=: I_{1,1}+I_{1,2}.
\end{aligned}
\end{equation}
First, we estimate the term $I_{1,2}$. Noticing that $Y_{s}^{2}$ is distributed as $\rho_{s}^{2}(x) \mathrm{d} x .$ Thus, using the H\"older's inequality, we have
\begin{equation}\label{I12}
\begin{aligned}
I_{1,2} & \leq \frac{2}{\delta} \int_{0}^{t} \int_{\mathbb{R}^{d}}\left|b_{s}^{1}(x)-b_{s}^{2}(x)\right| \rho_{s}^{2}(x) \mathrm{d} x \mathrm{d} s \\
& \leq \frac{2}{\delta} \int_{0}^{t}\left\|b_{s}^{1}-b_{s}^{2}\right\|_{L^{p}}\left\|\rho_{s}^{2}\right\|_{L^{q}} \mathrm{d} s \\
& \leq \frac{2}{\delta}\left\|\rho^{2}\right\|_{L^{\infty}\left(L^{q}\right)}\left\|b^{1}-b^{2}\right\|_{L^{1}\left(L^{p}\right)}.
\end{aligned}
\end{equation}
Next, we estimate the term $I_{1,1}$. Let $\chi \in C_{c}^{\infty}\left(\mathbb{R}^{d}, \mathbb{R}_{+}\right)$ such that $\operatorname{supp}(\chi) \subset B(1)$ and $\int_{\mathbb{R}^{d}} \chi(x) \mathrm{d} x=1$. Let $\chi_{\varepsilon}(x):=\varepsilon^{-d} \chi(x / \varepsilon)$, $\varepsilon \in(0,1)$. Define $b_{s}^{1, \varepsilon}:=b_{s}^{1} * \chi_{\varepsilon}.$ Then for a.e. $s \in[0, T]$, $b_{s}^{1, \varepsilon} \in C^{\infty}\left(\mathbb{R}^{d}\right)$
for every $\varepsilon \in(0,1).$ Then, by the triangle inequality,
\begin{equation}
\begin{aligned}
I_{1,1} & \leq 2 \mathbb{E} \int_{0}^{t} \frac{\left|b_{s}^{1, \varepsilon}\left(Y_{s}^{1}\right)-b_{s}^{1, \varepsilon}\left(Y_{s}^{2}\right)\right|}{\sqrt{\left|Z_{s}\right|^{2}+\delta^{2}}} \mathrm{d} s \\
&+2 \mathbb{E} \int_{0}^{t} \frac{\left|b_{s}^{1, \varepsilon}\left(Y_{s}^{1}\right)-b_{s}^{1}\left(Y_{s}^{1}\right)\right|+\left|b_{s}^{1, \varepsilon}\left(Y_{s}^{2}\right)-b_{s}^{1}\left(Y_{s}^{2}\right)\right|}{\sqrt{\left|Z_{s}\right|^{2}+\delta^{2}}} \mathrm{d} s \\
&=: I_{1,1,1}+I_{1,1,2} .
\end{aligned}
\end{equation}
As an application of (1) of Lemma \ref{lem}, we have
\begin{equation*}
\begin{aligned}
I_{1,1,1} & \leq 2 C_{d} \mathbb{E} \int_{0}^{t}\left(M\left|\nabla b_{s}^{1, \varepsilon}\right|\left(Y_{s}^{1}\right)+M\left|\nabla b_{s}^{1, \varepsilon}\right|\left(Y_{s}^{2}\right)\right) \mathrm{d} s \\
&=2 C_{d} \int_{0}^{t} \int_{\mathbb{R}^{d}} M\left|\nabla b_{s}^{1, \varepsilon}\right|(x)\left(\rho_{s}^{1}(x)+\rho_{s}^{2}(x)\right) \mathrm{d}x \mathrm{d} s.
\end{aligned}
\end{equation*}
Then, using the H\"older's inequality and (2) of Lemma \ref{lem}, we can estimate
\begin{equation}
\begin{aligned}
I_{1,1,1} & \leq 2 C_{d} \int_{0}^{t}\left\|M\left|\nabla b_{s}^{1, \varepsilon}\right|\right\|_{L^{p}}\left(\left\|\rho_{s}^{1}\right\|_{L^{q}}+\left\|\rho_{s}^{2}\right\|_{L^{q}}\right) \mathrm{d} s \\
& \leq 2 C_{d}\left(\sum_{i=1}^{2}\left\|\rho^{i}\right\|_{L^{\infty}\left(L^{q}\right)}\right) \int_{0}^{t} C_{d, p}\left\|\nabla b_{s}^{1, \varepsilon}\right\|_{L^{p}} \mathrm{d} s. \\
& \leq C_{d, p}\left(\sum_{i=1}^{2}\left\|\rho^{i}\right\|_{L^{\infty}\left(L^{q}\right)}\right)\left\|\nabla b^{1, \varepsilon}\right\|_{L^{1}\left(L^{p}\right)}.
\end{aligned}
\end{equation}
The quantity $I_{1,1,2}$ can be estimated as follows:
\begin{equation*}
\begin{aligned}
I_{1,1,2} & \leq \frac{2}{\delta} \mathbb{E} \int_{0}^{t}\left(\left|b_{s}^{1, \varepsilon}\left(Y_{s}^{1}\right)-b_{s}^{1}\left(Y_{s}^{1}\right)\right|+\left|b_{s}^{1, \varepsilon}\left(Y_{s}^{2}\right)-b_{s}^{1}\left(Y_{s}^{2}\right)\right|\right) \mathrm{d} s \\
&=\frac{2}{\delta} \int_{0}^{t} \int_{\mathbb{R}^{d}}\left|b_{s}^{1, \varepsilon}(x)-b_{s}^{1}(x)\right|\left(\rho_{s}^{1}(x)+\rho_{s}^{2}(x)\right) \mathrm{d} x \mathrm{d} s.
\end{aligned}
\end{equation*}
Again, we use H\"older's inequality:
\begin{equation}\label{I112}
\begin{aligned}
I_{1,1,2} & \leq \frac{2}{\delta} \int_{0}^{t}\left\|b_{s}^{1, \varepsilon}-b_{s}^{1}\right\|_{L^{p}}\left(\left\|\rho_{s}^{1}\right\|_{L^{q}}+\left\|\rho_{s}^{2}\right\|_{L^{q}}\right) \mathrm{d} s \\
& \leq \frac{2}{\delta}\left(\sum_{i=1}^{2}\left\|\rho^{i}\right\|_{L^{\infty}\left(L^{q}\right)}\right) \int_{0}^{t}\left\|b_{s}^{1, \varepsilon}-b_{s}^{1}\right\|_{L^{p}} \mathrm{d} s .
\end{aligned}
\end{equation}
We notice that the right hand side of \eqref{I112} vanishes as $\varepsilon \rightarrow 0$, since $b^{1} \in L^{1}\left(0, T; L^{p}\left(\mathbb{R}^{d}, \mathbb{R}^{d}\right)\right)$. Collecting all these estimates, and letting $\varepsilon \rightarrow 0$, we arrive at
\begin{equation}\label{I11}
I_{1,1} \leq C_{d, p}\left(\sum_{i=1}^{2}\left\|\rho^{i}\right\|_{L^{\infty}\left(L^{q}\right)}\right)\left\|\nabla b^{1}\right\|_{L^{1}\left(L^{p}\right)}.
\end{equation}
We deduce from \eqref{I12} and \eqref{I11} that
\begin{equation}\label{I1}
I_{1} \leq \frac{2}{\delta}\left\|\rho^{2}\right\|_{L^{\infty}\left(L^{q}\right)}\left\|b^{1}-b^{2}\right\|_{L^{1}\left(L^{p}\right)}+C_{d, p}\left(\sum_{i=1}^{2}\left\|\rho^{i}\right\|_{L^{\infty}\left(L^{q}\right)}\right)\left\|\nabla b^{1}\right\|_{L^{1}\left(L^{p}\right)}.
\end{equation}

\begin{description}
\item[An estimate of $I_{2}$:]
\end{description}

We estimate the term $I_{2}$ in the same way as for $I_{1}$. First, by the elementary inequality
\begin{equation}\nonumber
|\log(1+r)-r|\leq C|r|^2, \quad r \geqslant -\frac{1}{2},
\end{equation}
we have
\begin{equation}\nonumber
\begin{aligned}
I_{2} & \leq \mathbb{E} \int_{0}^{t} \int_{|z|<R}\left|\log \left(1+\frac{|Z_{s-}+G_s(z)|^{2}-|Z_{s-}|^2}{|Z_{s-}|^{2}+\delta^{2}}\right)-\frac{|Z_{s-}+G_s(z)|^{2}-|Z_{s-}|^2}{|Z_{s-}|^{2}+\delta^{2}}\right| \nu(\mathrm{d}z) \mathrm{d}s  \\
       &+  \mathbb{E} \int_{0}^{t} \int_{|z|<R}\frac{|G_s(z)|^{2}}{|Z_{s-}|^{2}+\delta^{2}}\nu(\mathrm{d}z) \mathrm{d}s  \\
       & \leq C \mathbb{E} \int_{0}^{t} \int_{|z|<R} \left(\frac{2\langle Z_{s-}, G_s(z)\rangle+|G_s(s)|^2}{|Z_{s-}|^{2}+\delta^{2}}\right)^2 \nu(\mathrm{d}z) \mathrm{d}s+ \mathbb{E} \int_{0}^{t} \int_{|z|<R}\frac{|G_s(z)|^{2}}{|Z_{s-}|^{2}+\delta^{2}}\nu(\mathrm{d}z) \mathrm{d}s\\
       & \leq C \mathbb{E} \int_{0}^{t} \int_{|z|<R} \frac{8\langle Z_{s-}, G_s(z)\rangle^2+2|G_s(s)|^4}{(|Z_{s-}|^{2}+\delta^{2})^2} \nu(\mathrm{d}z) \mathrm{d}s+ \mathbb{E} \int_{0}^{t} \int_{|z|<R}\frac{|G_s(z)|^{2}}{|Z_{s-}|^{2}+\delta^{2}}\nu(\mathrm{d}z) \mathrm{d}s\\
       & \leq C \mathbb{E} \int_{0}^{t} \int_{|z|<R} \left(\frac{8|G_s(z)|^2}{|Z_{s-}|^{2}+\delta^{2}}+\frac{2|G_s(s)|^4}{(|Z_{s-}|^{2}+\delta^{2})^2}\right) \nu(\mathrm{d}z) \mathrm{d}s+ \mathbb{E} \int_{0}^{t} \int_{|z|<R}\frac{|G_s(z)|^{2}}{|Z_{s-}|^{2}+\delta^{2}}\nu(\mathrm{d}z) \mathrm{d}s\\
       & \leq C \mathbb{E} \int_{0}^{t} \int_{|z|<R}\frac{|G_s(z)|^{2}}{|Z_{s-}|^{2}+\delta^{2}}\nu(\mathrm{d}z) \mathrm{d}s + C \mathbb{E} \int_{0}^{t} \int_{|z|<R}\frac{|G_s(z)|^{4}}{(|Z_{s-}|^{2}+\delta^{2})^2}\nu(\mathrm{d}z) \mathrm{d}s \\
&=: I_{2,1}+I_{2,2}.
\end{aligned}
\end{equation}
We first estimate the term $I_{2, 1}$. By the triangle inequality and inequality $(a+b)^2\leqslant 2a^2+2b^2$, we get
\begin{equation}\nonumber
\begin{aligned}
I_{2,1} &\leq C\mathbb{E} \int_{0}^{t} \int_{|z|<R} \frac{\left|g_{s}^{1}(Y_{s-}^{1}, z)-g_{s}^{1}(Y_{s-}^{2}, z)\right|^2}{|Z_{s-}|^{2}+\delta^{2}} \nu(\mathrm{d}z)\mathrm{d}s+C\mathbb{E} \int_{0}^{t} \int_{|z|<R} \frac{\left|g_{s}^{1}(Y_{s-}^{2}, z)-g_{s}^{2}(Y_{s-}^{2}, z)\right|^2}{|Z_{s-}|^{2}+\delta^{2}} \nu(\mathrm{d}z)\mathrm{d}s \\
&=: I_{2,1,1}+I_{2,1,2}.
\end{aligned}
\end{equation}
In the similar way as for $I_{1, 2}$, we obtain
\begin{equation}\label{I212}
I_{2,1,2} \leq \frac{C}{\delta^2}\|\rho^{2}\|_{L^{\infty}(L^{q})}\int_{|z|<R}\left\|g^{1}-g^{2}\right\|_{L^{2}\left(L^{2p}\right)}^{2}\nu(\mathrm{d}z).
\end{equation}
Next, defining $g_{s}^{1, \varepsilon}(x):=g_{s}^{1} (x)* \chi_{\varepsilon}(x)$, we have
\begin{equation*}
\begin{aligned}
I_{2,1,1} & \leq  C \mathbb{E} \int_{0}^{t} \int_{|z|<R} \frac{\left|g_{s}^{1, \varepsilon}(Y_{s}^1, z)-g_{s}^{1, \varepsilon}(Y_{s}^{2}, z)\right|^{2}}{|Z_{s-}|^{2}+\delta^{2}} \nu(\mathrm{d}z)\mathrm{d}s \\
&+C \mathbb{E} \int_{0}^{t} \int_{|z|<R} \frac{\left|g_{s}^{1, \varepsilon}(Y_{s}^{1}, z)-g_{s}^{1}(Y_{s}^{1}, z)\right|^{2}+\left|g_{s}^{1, \varepsilon}(Y_{s}^{2},z)-g_{s}^{1}(Y_{s}^{2}, z)\right|^{2}}{|Z_{s-}|^{2}+\delta^{2}} \nu(\mathrm{d}z)\mathrm{d}s \\
&=: I_{2,1,1,1}+I_{2,1,1,2} .
\end{aligned}
\end{equation*}
Following the arguments for $I_{1,1,1}$ and $I_{1,1,2}$, respectively, we can show that
\begin{equation}\nonumber
I_{2,1,1,1} \leq C \left(\sum_{i=1}^{2}\|\rho^{i}\|_{L^{\infty}(L^{q})}\right) \int_{|z|<R} \|\nabla g^{1, \varepsilon}\|_{L^{2}(L^{2 p})}^{2}\nu(\mathrm{d}z),
\end{equation}
and
\begin{equation*}
I_{2,1,1,2} \leq \frac{C}{\delta^{2}}\left(\sum_{i=1}^{2}\|\rho^{i}\|_{L^{\infty}(L^{q})}\right) \int_{0}^{t}\int_{|z|<R} \|g_{s}^{1, \varepsilon}-g_{s}^{1}\|_{L^{2 p}}^{2} \nu(\mathrm{d}z)\mathrm{d}s.
\end{equation*}
Combining the above two inequalities and letting $\varepsilon \rightarrow 0$, we arrive at
\begin{equation}\label{I211}
I_{2,1,1} \leq C \left(\sum_{i=1}^{2}\|\rho^{i}\|_{L^{\infty}(L^{q})}\right) \int_{|z|<R} \|\nabla g^{1}\|_{L^{2}(L^{2 p})}^{2}\nu(\mathrm{d}z).
\end{equation}
Combining \eqref{I212} and \eqref{I211}, we conclude that
\begin{equation}\label{I21}
\begin{aligned}
I_{2,1} & \leq  \frac{C}{\delta^{2}}\|\rho^{2}\|_{L^{\infty}(L^{q})} \int_{|z|<R} \|g^{1}-g^{2}\|_{L^{2}(L^{2 p})}^{2} \nu(\mathrm{d}z) \\
& + C^{\prime}\left(\sum_{i=1}^{2}\|\rho^{i}\|_{L^{\infty}(L^{q})}\right) \int_{|z|<R} \|\nabla g^{1}\|_{L^{2}(L^{2 p})}^{2}\nu(\mathrm{d}z).
\end{aligned}
\end{equation}
Treating the term $I_{2,2}$ in a similar way, we obtain
\begin{equation}\label{I22}
\begin{aligned}
I_{2,2} & \leq \frac{C}{\delta^{4}}\|\rho^{2}\|_{L^{\infty}(L^{q})} \int_{|z|<R} \|g^{1}-g^{2}\|_{L^{4}(L^{4 p})}^{4} \nu(\mathrm{d}z) \\
&+ C^{\prime}\left(\sum_{i=1}^{2}\|\rho^{i}\|_{L^{\infty}(L^{q})}\right) \int_{|z|<R} \|\nabla g^{1}\|_{L^{4}(L^{4 p})}^{4}\nu(\mathrm{d}z).
\end{aligned}
\end{equation}

Finally, substituting the estimates \eqref{I1}, \eqref{I21} and \eqref{I22} into \eqref{I}, we have proved that
\begin{equation}\nonumber
\begin{aligned}
\tilde{\mathcal{D}}_{\delta}\left(\mu_{t}^{1}, \mu_{t}^{2}\right) & \leq \tilde{\mathcal{D}}_{\delta}\left(\mu_{0}^{1}, \mu_{0}^{2}\right)+C\|\rho^{2}\|_{L^{\infty}(L^{q})}\left[ \frac{1}{\delta}\|b^{1}-b^{2}\|_{L^{1}(L^{p})}+\frac{1}{\delta^{2}} \int_{|z|<R} \|g^{1}-g^{2}\|_{L^{2}(L^{2 p})}^{2}\nu(\mathrm{d}z) \right.\\
&\left. +\frac{1}{\delta^{4}} \int_{|z|<R} \|g^{1}-g^{2}\|_{L^{4}(L^{4 p})}^{4}\nu(\mathrm{d}z) \right] +C^{\prime}\left(\sum_{i=1}^{2}\|\rho^{i}\|_{L^{\infty}(L^{q})}\right)\times \\
& \left[\|\nabla b^{1}\|_{L^{1}(L^{p})} + \int_{|z|<R} \|\nabla g^{1}\|_{L^{2}(L^{2 p})}^{2}\nu(\mathrm{d}z)+ \int_{|z|<R} \|\nabla g^{1}\|_{L^{4}(L^{4 p})}^{4}\nu(\mathrm{d}z) \right],
\end{aligned}
\end{equation}
where the constant $C, C^{\prime}>0$ depends only on $d$ and $p$. The proof is completed.
\end{proof}

\section{Examples}\label{sec5}
Let us consider the following example.

\begin{example}
Let $\{b^n\}_{n \geq 1}$ and $\{g^{n}\}_{n \geq 1}$ satisfy the assumptions of Theorem \ref{theo}. Suppose that a sequence of drifts $\{b^n\}_{n \geq 2}$ converges in $L^{1}\left(0, T ; L^{p}\left(\mathbb{R}^{d}\right)\right)$ to the limit drift $b^{1}$ and a sequence of jump diffusions $\left\{g^{n}(z)\right\}_{n \geq 2}$ converges in $L^{2}\left(0, T ; L^{2 p}(\mathbb{R}^{d})\right)\bigcap L^{4}\left(0, T ; L^{4 p}(\mathbb{R}^{d})\right)$
to the limit jump diffusion $g^{1}(z)$, for every z. Let $\mu_{t}^{n}=\rho_{t}^{n}(x) \mathrm{d} x$, with density $\rho_{t}^{n}$,  be the solution of  the nonlocal Fokker-Planck equation \eqref{FPE} corresponding to the coefficients $b^{n}$ and $g^{n}$, with the identical initial condition $\mu_{0}^{n}=\mu_{0}^{1}$. Furthermore, assume that the sequence of densities $\left\{\rho^{n}\right\}_{n \geq 1}$ is bounded in $L^{\infty}\left(0, T ; L^{1} \cap L^{q}\left(\mathbb{R}^{d}\right)\right)$. If we take $\delta$ in Theorem \ref{theo} to be the following value
\begin{equation}\nonumber
\begin{aligned}
\delta=\delta_{n}=\|b^{1}-b^{n}\|_{L^{1}(L^{p})}+\left(\int_{|z|<R} \|g^{1}-g^{n}\|_{L^{2}(L^{2 p})}^2\nu(\mathrm{d}z)\right)^{1/2}
+\left(\int_{|z|<R} \|g^{1}-g^{n}\|_{L^{4}(L^{4 p})}^4\nu(\mathrm{d}z)\right)^{1/4},
\end{aligned}
\end{equation}
then \eqref{theo1} of Theorem \ref{theo} implies that $\tilde{\mathcal{D}}_{\delta_{n}}\left(\mu_{t}^{1}, \mu_{t}^{n}\right) \leq \tilde{C}<\infty$. By Remark \ref{rem1}, we conclude that, as $n \rightarrow \infty, \mu_{t}^{n}$ converges weakly to $\mu_{t}^{1}$, with the speed of $\delta_{n}$.

This leads to a smooth approximation for non-local Fokker-Planck equations: Define $b^n:=b^{1} * \chi_{\varepsilon}$ and $g^n(z):=g^{1}(z) * \chi_{\varepsilon}$, where $\chi_{\varepsilon}(x):=\varepsilon^{-d} \chi(x / \varepsilon)$, $\varepsilon \in(0,1)$ and $\chi \in C_{c}^{\infty}\left(\mathbb{R}^{d}, \mathbb{R}_{+}\right)$ such that $\operatorname{supp}(\chi) \subset B(1)$ and $\int_{\mathbb{R}^{d}} \chi(x) \mathrm{d} x=1$. Then, we conclude that  a sequence of non-local Fokker-Planck equations with smooth coefficients $b^n$ and $g^n$ converges weakly to the non-local Fokker-Planck equation with weakly differentiable coefficients $b^1$ and $g^1$.

\end{example}

In this paper, we have examined the behavior of probability distributions for complex dynamical systems under non-Gaussian fluctuations. We have first estimated the Kantorovich-Rubinstein distance for solutions of non-local Fokker-Planck equations  associated with stochastic differential equations with non-Gaussian L\'evy noise. This is then  applied to establish  weak convergence of the corresponding probability distributions. Furthermore, this leads to a smooth approximation for  non-local Fokker-Planck equations, as illustrated in an example. For stochastic differential equation \eqref{SDE} with big jump, we can use the same method to estimate the Kantorovich-Rubinstein distance for solutions of corresponding non-local Fokker-Planck equations, but we need more assumptions about the jump coefficient.

\section*{Acknowledgements}
The authors are grateful to Li Lin and Li Lv for helpful discussions. This work was partly
supported by the NSFC grants 11771449.

\section*{Data Availability}
The data that support the findings of this study are available within the article.

\bibliographystyle{elsarticle-num}
\bibliography{refe}

\begin{thebibliography}{10}
\expandafter\ifx\csname url\endcsname\relax
  \def\url#1{\texttt{#1}}\fi
\expandafter\ifx\csname urlprefix\endcsname\relax\def\urlprefix{URL }\fi
\expandafter\ifx\csname href\endcsname\relax
  \def\href#1#2{#2} \def\path#1{#1}\fi

\bibitem{Moss1}
F.~Moss, P.~V. McClintock, Theory of Continuous Fokker-Planck Systems, Vol.~1
  of Noise in Nonlinear Dynamical Systems, Cambridge University Press, 2007.

\bibitem{Horst}
W.~Horsthemke, Noise Induced Transitions, Springer, 1984.

\bibitem{Gar}
J.~Garc{\'\i}a-Ojalvo, J.~Sancho, Noise in Spatially Extended Systems, Springer
  Science \&amp; Business Media, 2012.

\bibitem{VanKampen3}
N.~G. Van~Kampen, Stochastic Processes in Physics and Chemistry, Vol.~1,
  Elsevier, 1992.

\bibitem{Palmer1}
T.~N. Palmer, P.~Williams, Stochastic Physics and Climate Modeling, Cambridge
  Univ. Press, 2009.

\bibitem{ChenDuan}
B.~Chen, J.~Duan, Stochastic quantification of missing mechanisms in dynamical
  systems, in: Recent Development in Stochastic Dynamics and Stochastic
  Analysis, Interdisciplinary Mathematical Sciences, Vol.~8, World Scientific,
  2010, pp. 67--76.

\bibitem{Kantz}
W.~Just, H.~Kantz, C.~R{\"o}denbeck, M.~Helm, Stochastic modelling: replacing
  fast degrees of freedom by noise, Journal of Physics A: Mathematical and
  General 34~(15) (2001) 3199.

\bibitem{Wilks}
D.~S. Wilks, Effects of stochastic parametrizations in the lorenz'96 system,
  Quarterly Journal of the Royal Meteorological Society 131~(606) (2005)
  389--407.

\bibitem{Williams}
P.~D. Williams, Modelling climate change: the role of unresolved processes,
  Philosophical Transactions of the Royal Society of London A: Mathematical,
  Physical and Engineering Sciences 363~(1837) (2005) 2931--2946.

\bibitem{Arnold}
L.~Arnold, Random Dynamical Systems, Springer-Verlag, 1998.

\bibitem{DuanBook2015}
J.~Duan, An Introduction to Stochastic Dynamics, Cambridge University Press,
  New York, 2015.

\bibitem{Crauel}
H.~Crauel, F.~Flandoli, Additive noise destroys a pitchfork bifurcation,
  Journal of Dynamics and Differential Equations 10~(2) (1998) 259--274.

\bibitem{CarLanRob01}
T.~Caraballo, J.~A. Langa, J.~C. Robinson, A stochastic pitchfork bifurcation
  in a reaction-diffusion equation, in: Proceedings of the Royal Society of
  London A: Mathematical, Physical and Engineering Sciences, Vol. 457, The
  Royal Society, 2001, pp. 2041--2061.

\bibitem{imkeller2002model}
P.~Imkeller, I.~Pavlyukevich, Model reduction and stochastic resonance,
  Stochastics and Dynamics 2~(04) (2002) 463--506.

\bibitem{blomker2003pattern}
D.~Bl{\"o}mker, S.~Maier-Paape, Pattern formation below criticality forced by
  noise, Zeitschrift f{\"u}r angewandte Mathematik und Physik ZAMP 54~(1)
  (2003) 1--25.

\bibitem{Ikeda}
N.~Ikeda, S.~Watanabe, Stochastic Differential Equations and Diffusion
  Processes, North-Holland Publishing Company, 1981.

\bibitem{Okse2003}
B.~K. Oksendal, Stochastic Differential Equations : an Introduction with
  Applications, Springer, 6th Edition, 2003.

\bibitem{WaymireDuan}
E.~Waymire, J.~Duan, Probability and Partial Differential Equations in Modern
  Applied Mathematics, Springer-Verlag, 2005.

\bibitem{Woy}
W.~A. Woyczynski, L\'evy processes in the physical sciences, in: O.~E.
  Barndorff-Nielsen, T.~Mikosch, S.~I. Resnick (Eds.), L\'evy Processes: Theory
  and Applications, Birkhauser, Boston, 2001, pp. 241--266.

\bibitem{Dit}
P.~D. Ditlevsen, Observation of $\alpha$-stable noise induced millennial
  climate changes from an ice record, Geophys. Res. Lett. 26 (1999) 1441--1444.

\bibitem{Swinney}
T.~Solomon, E.~R. Weeks, H.~L. Swinney, Observation of anomalous diffusion and
  {L{\'e}vy} flights in a two-dimensional rotating flow, Physical Review
  Letters 71~(24) (1993) 3975.

\bibitem{Shlesinger}
M.~F. Shlesinger, G.~M. Zaslavsky, U.~Frisch, L\'evy Flights and Related Topics
  in Physics, Lecture Notes in Physics, 450, Springer-Verlag, 1995.

\bibitem{taqqu}
G.~Samoradnitsky, M.~S. Taqqu, Stable Non-{G}aussian Random Processes:
  Stochastic Models with Infinite Variance, Vol.~1, CRC Press, 1994.

\bibitem{dybiec2009levy}
B.~Dybiec, E.~Gudowska-Nowak, L{\'e}vy stable noise-induced transitions:
  stochastic resonance, resonant activation and dynamic hysteresis, Journal of
  Statistical Mechanics: Theory and Experiment 2009~(05) (2009) P05004.

\bibitem{Applebaum}
D.~Applebaum, L\'evy Processes and Stochastic Calculus, 2nd Edition, Cambridge
  University Press, Cambridge, U.K., 2009.

\bibitem{QZ08}
H.~Qiao, X.~Zhang, Homeomorphism flows for non-{L}ipschitz stochastic
  differential equations with jumps, Stochastic Processes and their
  Applications 118 (2008) 2254--2268.

\bibitem{XZ20}
L.~Xie, X.~Zhang, Ergodicity of stochastic differential equations with jumps
  and singular coefficients, Ann. Inst. H. Poincar\'e Probab. Statist. 56~(1)
  (2020) 175--229.

\bibitem{XZ13}
X.~Zhang, Degenerate irregular {SDE}s with jumps and application to
  integro-differential equations of {F}okker-{P}lanck type, Electron. J.
  Probab. 18 (2013) 1--25.

\bibitem{K96}
H.~Kunita, Stochastic differential equations with jumps and stochastic flows of
  diffeomorphisms, in: It\^o's Stochastic Calculus and Probability Theory,
  Springer, Tokyo, 1996, pp. 197--211.

\bibitem{BKR15}
V.~I. Bogachev, N.~V. Krylov, M.~R\"ockner, Fokker-Planck-Kolmogorov Equations,
  Mathematical Surveys and Monographs, Vol. vol. 207, American Mathematical
  Society, Providence, RI, 2015.

\bibitem{BRS15}
V.~I. Bogachev, M.~R\"ockner, S.~V. Shaposhnikov, Uniqueness problems for
  degenerate {F}okker-{P}lanck-{K}olmogorov equations, J Math Sci 207 (2015)
  147--165.

\bibitem{GW12}
A.~Guillin, F.~Wang, Degenerate {F}okker-{P}lanck equations: {B}ismut formula,
  gradient estimate and {H}arnack inequality, J. Differ. Equat. 253~(1) (2012)
  20--40.

\bibitem{CS17}
C.~Seis, A quantitative theory for the continuity equation, Ann. Inst. Henri
  Poincare, Anal Non Lin\'eaire 34 (2017) 1837--1850.

\bibitem{LL19}
H.~Li, D.~Luo, Quantitative stability estimates for {F}okker-{P}lanck
  equations, J. Math. Pures Appl. 122 (2019) 125--163.

\bibitem{OM}
O.~Manita, Estimates for transportation costs along solutions to
  {F}okker-{P}lanck-{K}olmogorov equations with dissipative drifts, Rendiconti
  Lincei-matematica E Applicazioni 28 (2017) 601--618.

\bibitem{BRS16}
V.~I. Bogachev, M.~R\"ockner, S.~V. Shaposhnikov, Distances between transition
  probabilities of diffusions and applications to nonlinear
  {F}okker-{P}lanck-{K}olmogorov equations, J. Funct. Anal. 271~(5) (2016)
  1262--1300.

\bibitem{FX19}
N.~Fournier, L.~Xu, On the equivalence between some jumping {SDE}s with rough
  coefficients and some non-local {PDE}s, Ann. Inst. Henri Poincar\'e Probab.
  Stat. 55 (2019) 1163--1178.

\bibitem{CV03}
C.~Villani, Topics in Optimal Transportation, Vol.~58, American Mathematical
  Society, 2003.

\bibitem{Z13}
X.~Zhang, Well-posedness and large deviation for degenerate {SDE}s with
  {S}obolev coefficients, Rev. Mat. Iberoam. 29 (2013) 25--62.

\bibitem{FL10}
S.~Fang, D.~Luo, A.~Thalmaier, Stochastic differential equations with
  coefficients in {S}obolev spaces, J. Funct. Anal. 259~(5) (2010) 1129--1168.

\bibitem{SV06}
D.~W. Stroock, S.~R.~S. Varadhan, Multidimensional Diffusion Processes,
  Springer, Berlin, 2006.

\bibitem{AF08}
A.~Figalli, Existence and uniqueness of martingale solutions for {SDE}s with
  rough or degenerate coefficients, J. Funct. Anal. no. 1 (2008) 109--153.

\bibitem{K04}
H.~Kunita, Stochastic differential equations based on {L}{\'e}vy processes and
  stochastic flows of diffeomorphisms, in: Real and Stochastic Analysis: New
  Perspectives, Birkh\"auser Boston, Boston, MA, 2004, pp. 305--373.

\end{thebibliography}

\end{document}